\newtheorem{lemma}[equation]{Lemma}
\newtheorem{prop}[equation]{Proposition}
\newtheorem{thm}[equation]{Theorem}
\newtheorem{cor}[equation]{Corollary}
\newtheorem{thmx}{Theorem}
\newtheorem*{corx}{Corollary}
\theoremstyle{definition}
\newtheorem{defn}[equation]{Definition}
\theoremstyle{definition}
\newtheorem{exmp}[equation]{Example}
\newtheorem{rmk}[equation]{Remark}
\numberwithin{equation}{section}
\newcommand{\Q}{\mathbf{Q}}
\newcommand{\F}{\mathbf{F}}
\newcommand{\func}[3]{\mbox{$#1 \colon #2 \to #3$}}
\newcommand{\qt}[2]{#1 \backslash #2}
\newcommand{\Ob}[1]{\mathrm{Ob}(#1)}
\newcommand{\cat}[3]{\mathcal{#1}_{#2}^{#3}}  
\newcommand{\catt}[3]{\widetilde{\mathcal{#1}}_{#2}^{#3}} 
\newcommand{\cattc}[2]{\widetilde{\mathcal{#1}}_{#2}^{\mathrm{sfc}}} 
\newcommand{\m}{morphism}
\newcommand{\syl}[1]{Sylow $#1$-subgroup}
\newcommand{\we}{weighting}
\newcommand{\Euc}{Euler characteristic}
\newcommand{\rchi}{\widetilde{\chi}}
\newcommand{\colim}{\operatornamewithlimits{colim}}
\title{Homotopy equivalences between $p$-subgroup categories}
\author{Matthew Gelvin}
\email{mgelvin@gmail.com}
\urladdr{}
\author{Jesper M.~M\o ller}
\address{Institut for Matematiske Fag\\
  Universitetsparken 5\\
  DK--2100 K\o benhavn}
\email{moller@math.ku.dk}
\urladdr{htpp://www.math.ku.dk/~moller}
\thanks
{Supported by the Danish National Research Foundation through the Centre 
for Symmetry and Deformation (DNRF92)}
\subjclass[2000]{05E15, 20J15} \keywords{Brown poset, Quillen poset, Bouc
  poset, Frobenius category, fusion category, linking category, orbit
  category, \Euc} 
\begin{document}
\date{\today}
\maketitle
\tableofcontents

\begin{abstract}
  Let $\cat SG*$ be the Brown poset of nonidentity $p$-subgroups of
  the finite group $G$ ordered by inclusion. Results of Bouc and
  Quillen show that $\cat SG*$ is homotopy equivalent to its subposets
  $\cat SG{*+\mathrm{rad}}$ of nonidentity radical $p$-subgroups and
  $\cat SG{*+\mathrm{eab}}$ of nonidentity elementary abelian
  $p$-subgroups. In this note we extend these results for the Brown
  poset of $G$ to other categories of $p$-subgroups of $G$, including the
  $p$-fusion system of $G$.
\end{abstract}

\section{Introduction}
\label{sec:intro}

Let $p$ be a prime number and $G$ a finite group of order divisible by $p$. 

The \emph{Brown poset} $\cat SG*$ consists of the nonidentity $p$-subgroups of $G$; this can be viewed topologically as   the simplicial complex $|\cat SG*|$.  Brown showed in \cite{brown75} that consideration of the Euler characteristic $\chi(|\cat SG*|)$ of $\cat SG*$ leads to a sort of topological Sylow Theorem: If $|G|_p$ is the maximal power of $p$ dividing $|G|$, then $\chi(|\cat SG*|)\equiv 1$ modulo $|G|_p$.  Thus, $G$ gives rise to a combinatorial geometric object---the realization of a category---whose topology reflects some of the algebraic structure of $G$.

$\cat SG*$ is comparatively simple for a category, but there are other constructions we might consider to gain further understanding of $G$.  Dwyer's theory of homology decompositions \cite{dwyer97} shows that even recalling the natural $G$-action on $\cat SG*$ is enough to determine the $p$-homology of $G$, though not conversely.  On the other hand,  the \emph{centric linking system} $\cat LG{\mathrm{sfc}}$ of \cite{blo1}, whose objects are the  \emph{$p$-selfcentralzing} subgroups of $G$, is significantly more complicated than $\cat SG*$ but both determines and is determined by the $p$-completed classifying space of $G$.

In each of these examples, the topological data is overdetermined by the category: We could have obtained the same result with a smaller collections of $p$-subgroups of $G$.  Quillen showed \cite{quillen78} that  the realization of $\cat SG*$ has the same homotopy type (and hence the same Euler characteristic) as the full subcategory $\cat SG{*+\mathrm{eab}}$ of nontrivial elementary abelian subgroups of $G$, and Bouc lated proved \cite{bouc84a} the dual result that the full subcategory $\cat SG{*+\mathrm{rad}}$ of $G$-radical $p$-subgroups is also homotopy equivalent.  Dwyer's notion of an \emph{ample} collection of subgroups is precisely the requirement that the $p$-homology of $G$ can be recovered from the resulting sub-$G$-poset; both elementary abelian and $G$-radical subgroups form such a collection.  And finally, the inclusion of the full subcategory of $G$-radical subgroups $\cat LG{\mathrm{sfc}+\mathrm{rad}}\subseteq\cat LG{\mathrm{sfc}}$ was shown to be a homotopy equivalence in \cite{bcglo01}.

In this paper we are interested in exploring the homotopy type of several such \emph{$p$-subgroup categories}:  Brown posets $\cat SG*$, transporter systems $\cat TG*$, linking systems $\cat LG*$, orbit systems $\cat OG{}$, and the ambient-group free abstractions of these: Frobenius categories (or fusion systems) $\cat F{}{}$ and exterior quotients of Frobenius categories (the fusion-theoretic analogue of an orbit category) $\catt F{}{}$.  See {\bf \S2} for definitions of these and their relationships to one another.  More precisely, we are interested in identifying certain classes of subgroups that control the homotopy type of each of these $p$-subgroup categories, in the sense of the main results  of {\bf \S\S6-10}:

   \begin{thmx}\label{thm:summary}
     The following inclusion functors are homotopy equivalences.
     \begin{enumerate}[(a)]
     \item  $\cat SG{*+\mathrm{eab}} \hookrightarrow \cat SG*$,
        $\cat SG{*+\mathrm{rad}} \hookrightarrow \cat SG*$, 
       $\cat SG{\mathrm{sfc}+\mathrm{rad}} \hookrightarrow \cat
       SG{\mathrm{sfc}}$
       \label{thm:summarya}

     \item  $\cat TG{*+\mathrm{eab}} \hookrightarrow \cat TG*$,
       $\cat TG{*+\mathrm{rad}} \hookrightarrow \cat TG*$, 
       $\cat TG{\mathrm{sfc}+\mathrm{rad}} \hookrightarrow \cat
       TG{\mathrm{sfc}}$

     \item  $\cat F{}{*+\mathrm{eab}} \hookrightarrow \cat F{}*$
       \label{thm:summaryb} 
     \item  $\cat OG{\mathrm{rad}} \hookrightarrow \cat OG{}$, 
       $\cat OG{*+\mathrm{rad}} \hookrightarrow \cat OG{*}$, 
       $\cat OG{\mathrm{sfc}+\mathrm{rad}} \hookrightarrow \cat
       OG{\mathrm{sfc}}$ \label{thm:summaryc}
     \item $\catt F{}{*+\mathrm{eab}} \hookrightarrow \catt F{}*$,
       $\catt F{}{\mathrm{sfc}+\mathrm{rad}} \hookrightarrow \catt
       F{}{\mathrm{sfc}}$
       \label{thm:summaryd}  
     \item $\cat L{G}{*+\mathrm{eab}} \hookrightarrow \cat L{G}*$,
       $\cat L{}{\mathrm{sfc}+\mathrm{rad}} \hookrightarrow \cat
       L{}{\mathrm{sfc}}$
       \label{thm:summarye} 
     \end{enumerate}
   \end{thmx}
   Here and for the rest of the paper, a functor of categories a \emph{homotopy equivalence} if the induced map of realizations is a homotopy equivalence.
   
   As indicated above, some of these results are well known in the literature, while others are new and provide new insight into the topological relationships between these categories.  For instance, Theorem~\ref{thm:summary}(\ref{thm:summaryb}) and(\ref{thm:summaryd}) together with the equality of categories $\cat F{}{*+\mathrm{eab}}=\catt F{}{*+\mathrm{eab}}$ implies the unexpected
   
    \begin{corx}\label{cor:FtotildeF}
   The quotient functor $\cat F{}* \to \catt F{}*$ is a homotopy
   equivalence.
  \end{corx}
  
    More generally, we find it curious that  the combinatorics of the Frobenius
  category $\cat FG*$, which is generally thought of as simply an organizing framework for the $p$-local data of $G$, can identify the elementary abelian
  $p$-subgroups of $G$.   Similarly, the orbit category $\cat OG{}$ is able to identify
  the $G$-radical and the cyclic subgroups, and the exterior
  quotient $\catt F{}{\mathrm{sfc}}$ of an abstract Frobenius category
  $\cat F{}{}$ is able to identify the $\cat F{}{}$-radical subgroups.  We take this as evidence of a general theme that $p$-subgroup categories  encode group structure in unexpected
  ways.
  
To understand the manner in which the shapes of our $p$-subgroup categories determine certain group-theoretic data,  we must discuss our method of proof for Theorem~\ref{thm:summary}.  Indeed, for us the method is at least as interesting as the final result. There are two interwoven threads to this story:  In one, we make use of Leinster's theory of Euler characteristics for a general category \cite{leinster08} to identify a class of subgroups that is likely to control homotopy of the $p$-subgroup category; in the other, we make use of a special case of Quillen's celebrated Theorem~A on homotopy equivalences of categories \cite{quillen73} to prove that our proposed class of subgroup actually does control the homotopy type.  We now summarize these points.

Consider first the special case of an inclusion of posets $\iota:\cat P{}{}\subseteq\cat Q{}{}$.  Here, Quillen's Theorem~A says that $\cat P{}{}$ is homotopy equivalent to $\cat Q{}{}$ if every slice or coslice category of $\iota$ is contractible.  In other words, it suffices to show that for every $q\in\cat Q{}{}$, we have $\cat P{}{}/q:=\{p\in\cat P{}{}|p\leq q\}$ is contractible; or dually that for every $q\in\cat Q{}{}$, $q/\cat P{}{}:=\{p\in\cat P{}{}|q\leq p\}$ is contractible.  Rephrased slightly: In order for $\iota$ to be a homotopy equivalence, it is necessary that every object $q\in\cat Q{}{}$ whose  \emph{proper} slice category $\cat P{}{}//q:=\{p\in\cat P{}{}|p\lneq q\}$ is \emph{noncontractible} be an object of $\cat P{}{}$, or that the dual criterion hold.  The first case led to Quillen's result $\cat SG{*+\mathrm{eab}}\simeq\cat SG*$, and the second to Bouc's homotopy equivalence $\cat SG{*+\mathrm{rad}}\simeq\cat SG*$.

When we generalize to our $p$-subgroup categories, it's important to note that we are not generalizing very far:  All of the categories $\cat C{}{}$ we consider in this paper are finite \emph{EI-categories}, so that every endomorphism of every object of $\cat C{}{}$ is an isomorphism.  For us then, the role of proper slice category $\cat C{}{}//x$ of $x\in\cat C{}{}$ is filled by the category of \emph{nonisomorphisms} with target $x$; the slice category is dually the category of nonisomorphisms with source $x$.  See {\bf \S3} for precise definitions.  We then have our main technical tool for showing that a class of subgroups controls homotopy, appearing as Theorem~\ref{lemma:boucEI2}:

  \begin{thmx}\label{thm:IntroBouc}
    The homotopy type of a finite EI-category $\cat C{}{}$ is controlled by the set of objects whose proper slice categories are noncontractible, or dually those objects whose proper coslice categories are noncontractible. 
  \end{thmx}
  
  In other words, if all you care about is the homotopy type of $\cat C{}{}$, you might as well throw away all objects $x$ such that $\cat C{}{}//x\simeq*$.  This material is covered in {\bf \S4}, and the key technical results needed for implementation of Theorem~\ref{thm:IntroBouc} is collected in {\bf\S5}.

Of course, this is only helpful if we have a proposed class of subgroups that might control homotopy of the $p$-subgroup category.  In order to direct our search we turn to Leinster's Euler characteristics for categories.  This is a generalization of the Euler characteristic of a poset or a space, which relies on the notions of \emph{weightings} and \emph{coweightings} for a category $\cat C{}{}$.  Roughly speaking, these are functions $k^\bullet_{\cat C{}{}},k_\bullet^{\cat C{}{}}:\mathrm{Ob}(\cat C{}{})\to{\mathbf Q}$ that serve as right and left ``inverses'' to the generalized incidence matrix, which records the number of morphisms between any two object of $\cat C{}{}$.  If both a weighting and a coweighting for $\cat C{}{}$ exist, as is always the case for finite EI-categories, then the \emph{Euler characteristic} $\chi(\cat C{}{})$ of $\cat C{}{}$ is the common sum of the values of either function.  The connection with Theorem~\ref{thm:IntroBouc} comes from Theorem~\ref{lemma:underwt}, whose key point is the following

\begin{thmx}
Let $\cat C{}{}$ be a finite EI-category.  There is a weighting for $\cat C{}{}$ that on an object $x$ takes a value proportional to the Euler characteristic of the proper coslice category of $x$ minus 1 (the \emph{reduced} Euler characteristic $\rchi$).  Dually, there is a coweighting whose value on $x$ is proportional to the reduced Euler characteristic of the proper slice category of $x$.  Thus there are constants $\kappa^x$ and $\kappa_x$ such that
\[
k^x_{\cat C{}{}}=\kappa^x\cdot\rchi(x//\cat C{}{})\qquad{and}\qquad
k_x^{\cat C{}{}}=\kappa_x\cdot\rchi(\cat C{}{}//x).
\]
\end{thmx}

In particular, the weighting is concentrated on the objects whose proper coslice categories have nonzero reduced Euler characteristic, and are thus necessarily noncontractible; dually for the coweighting.  This suggests (but does not prove!) that the classes of subgroups we consider should be those with nonzero (co)weightings in our $p$-subgroup categories, which were computed in \cite{jmm_mwj:2010}.  With our Euler characteristic calculations in hand, we conclude by applying Theorem~\ref{thm:IntroBouc}.

\section{$p$-subgroup categories}
\label{sec:subgrpcats}
This section contains precise definitions of the $p$-subgroup
categories occurring in this paper. By convention, maps act on
elements from the right, and composition of \m s is written in
diagrammatic order. Likewise, functors act on categories from the
right.

If $a$ and $b$ are objects in a category $\cat C{}{}$, we write $\cat C{}{}(a,b)$ for the set of $\cat C{}{}$-\m s with domain
  $a$ and codomain $b$, and $\cat C{}{}(a)$ is the monoid of $\cat C{}{}$-endo\m s of $a$.  All categories considered in this paper are EI-categories, so for us $\cat C{}{}(a)$ is in fact a group.

Fix a finite group $G$.  The most fundamental $p$-subgroup category  we consider is the poset $\cat SG{}$  of all $p$-subgroups of $G$, ordered
by inclusion. In other words, $\cat SG{}$ is the category whose
objects are all $p$-subgroups of $G$ with one \m\ $H \to K$ whenever
$H \leq K$ and no \m s otherwise.  

$\cat SG{}$ forms the backbone for all of the $p$-subgroup categories we consider. With our finite group $G$ still in mind, we consider the following categories, all of which have as objects the $p$-subgroups of $G$:

\begin{description}
\item[$\cat TG{}$]  The transporter category of  $G$; morphisms are elements of $G$ conjugating one subgroup into another.
\item[$\cat FG{}$] The Frobenius, or $p$-fusion, category of $G$
\cite{puig09,blo03}; morphisms are group-maps between subgroups induced by $G$-conjugation
\item[$\cat LG{}$] The linking category of all $p$-subgroups
  $G$; an intermediary between $\cat TG{}$ and $\cat FG{}$, thought of as  killing the $p'$-part of the kernel of the natural map $\cat TG{}\to\cat FG{}$. \cite{blo1}\footnote{Note that in much of the literature, only a full subcategory of $\cat LG{}$ is considered, the \emph{centric} linking system $\cat LG{\mathrm{sfc}}$. This full subcategory has much better homotopical properties than the full linking category, and we will concentrate on it once the notion of $\cat F{}{}$-selfcentralizing, or $\cat F{}{}$-centric, subgroup is recalled below.}
\item[$\cat OG{}$] The $p$-orbit category of  $G$; morphisms are $G$-maps between transitive $G$-sets with $p$-group isotropy.
\item[$\catt FG{}$]  The exterior quotient of the
  Frobenius category $\cat FG{}$; a fusion-theoretic analogue of $\cat OG{}$. \cite[1.3, 4.8]{puig09}
\end{description}

More explicitly, for any  $p$-subgroups $H$ and $K$ of $G$, the morphisms of the above categories are given by:
\begin{alignat*}{3}
  &\cat TG{}(H,K) = N_G(H,K) & &\qquad & 
  &\cat LG{}(H,K) = \qt{O^pC_G(H)}{N_G(H,K)} \\
  &\cat FG{}(H,K) = \qt{C_G(H)}{N_G(H,K)} & &\qquad & 
  &\cat OG{}(H,K) =  N_G(H,K)/K  \\
  &\catt FG{}(H,K) = \qt{C_G(H)}{N_G(H,K)}/K
\end{alignat*}
where $N_G(H,K)$ is the \emph{transporter set} $\{g \in G \mid H^g \leq K \}$ and $O^pL$ denotes the minimal normal $p$-power index subgroup of $L$.  Composition in these categories is induced by the group
multiplication of $G$.  


If $H\leq G$ is a $p$-subgroup, the auto\m\ groups in these
categories of $G$ are given by
\[
\begin{array}{ccccc}
  \mathcal{S}_G(H)=1,&&\mathcal{T}_G(H)=N_G(H),
  && \mathcal{L}_G(H)=O^pC_G(H)\backslash N_G(H),\\
  \mathcal{F}_G(H)=C_G(H)\backslash
  N_G(H),&&\mathcal{O}_G(H)=N_G(H)/H,
  &&\catt FG{}(H)=C_G(H)\backslash N_G(H)/H,
\end{array}
\]
The six categories  are related by the commutative diagram
\begin{equation*}
  \xymatrix@R=15pt{
    {\cat SG{}} \ar@{^(->}[r] &
    {\cat TG{}} \ar@{->>}[r] \ar@{->>}[d] &
    {\cat LG{}} \ar@{->>}[r] & 
    {\cat FG{}} \ar@{->>}[d] \\
    & {\cat OG{}} \ar@{->>}[rr]
    && {\widetilde{\mathcal{F}}_G}   } 
\end{equation*}
of one faithful and five full functors.

$\cat SG{}$ contains the identity subgroup as an initial object, so it is contractible.  More topologically interesting is the \emph{Brown poset} $\cat SG*$ of all nonidentity subgroups of $G$, which has long been an object of interest as a sort of geometry for the finite group $G$ (cf. \cite{brown75,quillen78}).  More generally, decorating one of our $p$-subgroup categories with an asterisk will denote the full subcategory of nonidentity subgroups:  $\cat TG*,\cat FG*$, etc.

Sylow's Theorem implies that each of our $p$-subgroup categories (other $\cat SG{}$) is equivalent to its full subcategory with objects  the subgroups of a fixed Sylow $p$-subgroup $P\in\mathrm{Syl}_p(G)$.  We will prefer to work with these ``pointed'' versions, especially as this convention allows us to work with \emph{abstract} $p$-subgroup categories that make no reference to an ambient group $G$:

Fix a finite nonidentity $p$-group $P$. An \emph{Frobenius $P$-category},  or
\emph{(abstract) saturated fusion system over $P$}, is a category whose objects are the
subgroups of $P$ and whose \m s satisfy a set of axioms
\cite{puig09,blo03} that distill the properties of the Frobenius
categories $\cat FG{}$ coming from a group $G$. There are examples of
abstract Frobenius $P$-categories $\cat F{}{}$ that are exotic in the
sense that there is no finite group $G$ with $\cat FG{}$ equal to
$\cat F{}{}$. 

The \emph{exterior quotient}, or  \emph{orbit category},  of
$\cat F{}{}$ is the category $\catt F{}{}$ whose objects are the subgroups of $P$
and with \m\ sets
\begin{equation*}
  \catt F{}{}(H,K) = \cat F{}{}(H,K)/\cat FK{}(K)
\end{equation*}
are $\cat F{}{}$-\m s modulo inner auto\m s of the codomain.
Composition in $\cat F{}{}$ induces composition in its quotient
category $\catt F{}{}$.

Finally,  we recall the fundamental notions of $G$- and $\cat
F{}{}$-selfcentralizing and $G$- and $\cat F{}{}$-radical subgroups. As usual, $O_pK$ is the
largest normal $p$-subgroup of the finite group $K$ \cite[Chp
6.3]{gorenstein68}.

\begin{defn}\cite[4.8.1]{puig09} \cite[Definition A.3]{blo03} 
  \cite[Proposition 4]{bouc84a}\label{defn:Grad}
  The $p$-subgroup $H$ of $G$ is
  \begin{itemize}
  \item \emph{$p$-selfcentralizing (in $G$)} if the center $Z(H)$ of $H$ is a \syl p\ of
    the centralizer $C_G(H)$ of $H$;
   \item \emph{$G$-radical} if $O_p\cat OG{}(H)=1$, or, equivalently,
     $H=O_pN_G(H)$.  
  \end{itemize}
\end{defn}

  \begin{defn}\cite[4.8]{puig09} \cite[Definition A.9]{blo03} 
  \label{defn:radical}\label{defn:Frad}
    An object $H$ of  $\cat F{}{}$ is
    \begin{itemize}
    \item \emph{$\cat F{}{}$-selfcentralizing}
    if $C_P(H^\varphi) \leq H^\varphi$ for every $\cat F{}{}$-\m\
    $\varphi \in \cat F{}{}(H,P)$ with domain $H$
  \item  \emph{$\cat F{}{}$-radical} if $O_p \catt F{}{}(H) = 1$
    \end{itemize}
  \end{defn}

  Every $p$-selfcentralizing subgroup of $G$ is nontrivial, as is every
  $\cat F{}{}$-selfcentralizing subgroup of $P$.

  Let $P\in$ be a \syl p of $G$ and $\cat F{}{}=\cat FG{}$ the induced Frobenius $P$-category. For every 
  $H\leq P$,
  \begin{gather*}
     \text{$H$ is $\cat F{}{}$-selfcentralizing} \iff
     \text{$H$ is $p$-selfcentralizing in $G$} \\
     \text{$H$ is $\cat F{}{}$-selfcentralizing and $\cat
       F{}{}$-radical} \Longrightarrow
    \text{$H$ is $G$-radical}
  \end{gather*}
and the second implication can not be reversed.

According to Quillen \cite[Proposition 2.4]{quillen78},
we have 
\begin{equation}\label{eq:SK*contract}
  \text{$\cat SK*$  is noncontractible}
    \Longrightarrow O_pK=1
\end{equation}
for any finite group $K$.
 In the present context, this means that
\begin{align}
  &\text{$\cat S{\cat OG{}(H)}*$ is noncontractible}
    \Longrightarrow 
    \text{$H$ is $G$-radical} \label{eq:Grad} \\
  &  \text{$\cat S{\catt F{}{}(H)}{*}$ is noncontractible} 
   \Longrightarrow
   \text{$H$ is $\cat F{}{}$-radical} \label{eq:Frad}
\end{align}
Properties \eqref{eq:Grad} and \eqref{eq:Frad} will be very important
in the proof of our homotopy equivalences. (Quillen conjectures in
\cite[Conjecture~2.9]{quillen78} that the reverse implication of
\eqref{eq:SK*contract} is true.  If Quillen's conjecture holds,
then the reverse implications of \eqref{eq:Grad} and
\eqref{eq:Frad} are true as well.)

\section{Weightings and coweightings for $\mathrm{EI}$-categories}
\label{sec:weundercat}

Let $\cat C{}{}$ be a finite category and $[\cat C{}{}]$ the set of
iso\m\ classes of its objects. In this section we show that we can use
coslice categories to define \we s in the sense of Leinster
\cite{leinster08}.  In particular, we will relate the Euler characteristic of $\cat C{}{}$ to is coslice categories.

\begin{defn}\cite[Definitions~1.10, 2.2]{leinster08}\label{defn:wecowe}
  A \emph{\we}\ for $\cat C{}{}$ is a function $k^\bullet_{\cat C{}{}} \colon
  \Ob{\cat C{}{}} \to \Q$  so that
\begin{equation*}
  \forall a \in  \Ob{\cat C{}{}} \colon \sum_{b \in \Ob{\cat C{}{}}} 
  |\cat C{}{}(a,b)|k^b_{\cat C{}{}} = 1,
\end{equation*}
and a \emph{co\we}\ for $\cat C{}{}$ is a function $k_\bullet^{\cat C{}{}}
\colon \Ob{\cat C{}{}} \to \Q$ so that
  \begin{equation*}
  \forall b \in  \Ob{\cat C{}{}} \colon 
  \sum_{a \in \Ob{\cat C{}{}}} 
  k_a^{\cat C{}{}} |\cat C{}{}(a,b)| = 1.
  \end{equation*}
If $\cat C{}{}$ has both a \we\ and a co\we , then 
\begin{equation*}
  \sum_{a \in \Ob{\cat C{}{}}} k^a_{\cat C{}{}}\  = \sum_{b \in \Ob{\cat C{}{}}} k_b^{\cat C{}{}}\ =:\chi(\cat C{}{})
\end{equation*}
 is the \emph{\Euc}\ of $\cat C{}{}$. The {\em reduced\/}  \Euc\ of $\cat
C{}{}$ is $\rchi(\cat C{}{}) = \chi(\cat C{}{}) - 1$.
\end{defn}

  A general finite category may not admit a \we\ or co\we,  or it may
  have several.  If at least one of each exists,   the \Euc\ is
  independent of the choice of \we\ or co\we .  Moreover, if there are several (co)\we s, we will normalize  by singling out the unique (co)\we\ that is constant on each isomorphism class of our category.
  
  We will be interested in piecing together global (co)\we s from local data, to be described in terms of the (co)slice construction.

  \begin{defn}[Coslice and slice categories]\label{defn:C//c}
     Let $\cat C{}{}$ be a category with objects $x$ and $y$, and $\cat A{}{}$ a full
    subcategory of $\cat C{}{}$.

\begin{itemize}

\item $x/\cat A{}{}$ is the category of
$\cat C{}{}$-\m s from $x$ to an object of $\cat A{}{}$ 
(the \emph{coslice} of $\cat A{}{}$ under $x$)

\item $\cat A{}{}/y$ is the category of $\cat C{}{}$-\m s from an
  object of $\cat A{}{}$ to $y$
  (the \emph{slice} of $\cat A{}{}$ over $y$)

  \item $x//\cat A{}{}$ is the full subcategory of $x/\cat A{}{}$
    with objects all noniso\m s from $x$ to an object of $\cat
    A{}{}$ (the \emph{proper coslice} of $\cat A{}{}$ under $x$)

  \item $\cat A{}{}//y$ is the full subcategory of $\cat A{}{}/y$
    with objects all noniso\m s from an object of $\cat A{}{}$ to $y$
    (the \emph{proper slice} of $\cat A{}{}$ over $y$)

\end{itemize}

Thus an object of $x/\cat A{}{}$ is a $\cat C{}{}$-morphism $\varphi_1\in\cat C{}{}(x,a_1)$, and a $(x/\cat C{}{})$-morphism from $\varphi_1$ to $\varphi_2\in\cat C{}{}(x,a_2)$ is any $\eta\in\cat C{}{}(a_1,a_2)$ that makes the following triangle  commute  in $\cat C{}{}$:  $\xymatrix{&x\ar[dl]_{\varphi_1}\ar[dr]^{\varphi_2}\ar@{}[d]|-\circlearrowleft
&\\
a_1\ar@{.>}[rr]_\eta&&a_2}$

The other (co)slice categories are defined similarly.

\end{defn}

The (co)slice constructions define functors $\bullet/\cat A{}{}:\cat C{}{\mathrm{op}}\to\mathrm{CAT}$ and $\cat A{}{}/\bullet:\cat C{}{}\to\mathrm{CAT}$ via (pre)composition of morphisms.  The proper (co)slice constructions do not in general piece together to form a functor, for the simple reason that the composite of two nonisomorphisms can be an isomorphism.  However, if $\cat C{}{}$ is an EI-category, this obstruction vanishes and we also have functors $ \bullet // \cat A{}{} \colon \cat C{}{\mathrm{op}} \to \mathrm{CAT}$ and $\cat A{}{}//\bullet \colon
  \cat C{}{} \to \mathrm{CAT}$.  Again, all $p$-subgroup categories considered in this paper are EI-categories.

\begin{defn}
 If $\cat C{}{}$ is an EI-category, we write
  \begin{alignat*}{3}
    &\mathrm{supp}( \bullet / \cat A{}{}) = \{ x \in \Ob{\cat C{}{}}
    \mid \text{$x / \cat A{}{}$ is noncontractible} \} & &\qquad &
    &\mathrm{supp}( \cat A{}{} / \bullet) = \{ y \in \Ob{\cat C{}{}}
    \mid \text{$ \cat A{}{} / y$ is noncontractible} \} \\
    &\mathrm{supp}( \bullet // \cat A{}{}) = \{ x \in \Ob{\cat C{}{}}
    \mid \text{$x // \cat A{}{}$ is noncontractible} \}& &\qquad &
    &\mathrm{supp}( \cat A{}{} // \bullet) = \{ y \in \Ob{\cat C{}{}}
    \mid \text{$ \cat A{}{} // y$ is noncontractible} \}
  \end{alignat*}
  for the \emph{supports} of the coslice functors $\bullet / \cat A{}{},
  \bullet // \cat A{}{} \colon \cat C{}{\mathrm{op}} \to \mathrm{CAT}$
  and slice functors $\cat A{}{}/\bullet, \cat A{}{}//\bullet \colon
  \cat C{}{} \to \mathrm{CAT}$.
\end{defn}

The notation  $\cat A{}{}//\bullet$ and  $\bullet//\cat A{}{}$ is
taken from \cite[p $269$]{jackowski_slominska2001}.

\begin{lemma}\label{lemma:weundercat}
Let $\cat C{}{}$ be any finite category  admitting a \we\
$k^\bullet_{\cat C{}{}} \colon \Ob{\cat C{}{}} \to \Q$. Let $a$ be any
object of $\cat C{}{}$.  The function
\begin{equation*}
  k^\bullet_{a/\cat C{}{}} =
  k_{\cat C{}{}}^{\mathrm{cod}(\bullet)}
  \colon \Ob{a/\cat C{}{}} \to \Q, \qquad
  k_{a/\cat C{}{}}^{a \stackrel{\varphi}\to b} = k^b_{\cat C{}{}}, 
\end{equation*}
is a \we\ for the coslice $a/\cat C{}{}$ of $\cat C{}{}$
under $a$.
\end{lemma}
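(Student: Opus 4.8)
The plan is to verify the defining equation of a weighting directly for the coslice category $a/\cat C{}{}$, reducing it to the weighting equation for $\cat C{}{}$ by summing over codomains. Recall that an object of $a/\cat C{}{}$ is a $\cat C{}{}$-\m\ $\varphi\colon a\to b$, and that a $\cat C{}{}$-\m\ $f\colon b\to c$ is a \m\ in $a/\cat C{}{}$ from $\varphi\colon a\to b$ to $\psi\colon a\to c$ exactly when $\varphi f=\psi$ (composition in diagrammatic order). Thus $(a/\cat C{}{})(\varphi,\psi)=\{f\in\cat C{}{}(b,c)\mid \varphi f=\psi\}$, and the proposed value of the candidate weighting at the object $\psi$ is $k^c_{\cat C{}{}}$, where $c=\mathrm{cod}(\psi)$.

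First I would fix an object $\varphi\colon a\to b$ of $a/\cat C{}{}$ and organize the sum over objects $\psi$ of $a/\cat C{}{}$ by partitioning these objects according to their codomain $c\in\Ob{\cat C{}{}}$; for a fixed $c$, the objects $\psi$ of $a/\cat C{}{}$ with codomain $c$ are precisely the elements of $\cat C{}{}(a,c)$. With this bookkeeping,
\[
  \sum_{\psi\in\Ob{a/\cat C{}{}}}\bigl|(a/\cat C{}{})(\varphi,\psi)\bigr|\,k^\psi_{a/\cat C{}{}}
  =\sum_{c\in\Ob{\cat C{}{}}}\Bigl(\sum_{\psi\in\cat C{}{}(a,c)}\bigl|\{f\in\cat C{}{}(b,c)\mid \varphi f=\psi\}\bigr|\Bigr)k^c_{\cat C{}{}}.
\]

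Next I would observe that, for fixed $b$ and $c$, post-composition with $\varphi$ gives a function $\cat C{}{}(b,c)\to\cat C{}{}(a,c)$, $f\mapsto \varphi f$, so the fibers $\{f\in\cat C{}{}(b,c)\mid \varphi f=\psi\}$ over the various $\psi\in\cat C{}{}(a,c)$ form a partition of $\cat C{}{}(b,c)$. Hence the inner sum equals $|\cat C{}{}(b,c)|$, and the whole expression collapses to $\sum_{c\in\Ob{\cat C{}{}}}|\cat C{}{}(b,c)|\,k^c_{\cat C{}{}}$, which equals $1$ by the weighting equation for $\cat C{}{}$ applied to the object $b=\mathrm{cod}(\varphi)$. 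Since $\varphi$ was an arbitrary object of $a/\cat C{}{}$, this shows $k^\bullet_{a/\cat C{}{}}$ is a \we\ for $a/\cat C{}{}$.

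I do not expect a genuine obstacle here; the only points requiring care are the direction of composition in the coslice (with the paper's diagrammatic convention, a \m\ of $a/\cat C{}{}$ from $\varphi$ to $\psi$ is a \m\ $f$ with $\varphi f=\psi$, so the relevant fibers live in $\cat C{}{}(b,c)$ rather than in $\cat C{}{}(a,b)$) and the reindexing of the double sum over pairs $(c,\psi)$. Note that the argument uses neither any $\mathrm{EI}$-hypothesis nor finiteness beyond what is needed to make the sums finite and the weighting exist.
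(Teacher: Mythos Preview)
Your proof is correct and is essentially identical to the paper's own argument: both partition the objects of $a/\cat C{}{}$ by codomain and then observe that the fibers of post-composition $f\mapsto\varphi f$ partition $\cat C{}{}(b,c)$, reducing the weighting equation for $a/\cat C{}{}$ at $\varphi$ to the weighting equation for $\cat C{}{}$ at $b=\mathrm{cod}(\varphi)$. The paper records these two partitions as displayed equations and then performs the same one-line computation you give.
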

\begin{proof}
  The set of objects of $a/\cat C{}{}$, which is the set   of $\cat
  C{}{}$-\m s with domain $a$, is partitioned 
  \begin{equation}\label{eq:Oba/C}
    \Ob{a/\cat C{}{}} = \coprod_{b \in \Ob{\cat C{}{}}} \cat C{}{}(a,b)
  \end{equation}
  according to codomains. Also, for any $\cat C{}{}$-\m\ $\varphi \in \cat
  C{}{}(a,b)$ with codomain $b$ and any $\cat C{}{}$-object $c$, the
  set of $\cat C{}{}$-\m s from $b$ to $c$ is partitioned
 \begin{equation}\label{eq:Crs}
   \cat C{}{}(b,c) = \coprod_{\psi \in \cat C{}{}(a,c)} (a/\cat C{}{})(\varphi,\psi)
 \end{equation}
  into $a/\cat C{}{}$-\m\ sets with domain $\varphi$.
  The computation
  \begin{equation*}
    \sum_{\psi \in \Ob{a/\cat C{}{}}}
      |a/\cat C{}{}(\varphi,\psi)|  k_{\cat C{}{}}^{\mathrm{cod}(\psi)} 
      \stackrel{\eqref{eq:Oba/C}}{=}
      \sum_{b \in \Ob{\cat C{}{}}} \sum_{\psi \in \cat C{}{}(a,b)}
      |a/\cat C{}{}(\varphi,\psi)|  k_{\cat C{}{}}^{\mathrm{cod}(\psi)} 
      \stackrel{\eqref{eq:Crs}}{=}
       \sum_{b \in \Ob{\cat C{}{}}} |\cat C{}{}(\mathrm{cod}(\varphi),b)|  k_{\cat
         C{}{}}^{b} = 1
  \end{equation*}
  shows that the function $k_{\cat C{}{}}^{\mathrm{cod}(\bullet)}$ is
  a \we\ on $a/\cat C{}{}$.
\end{proof}

Here is a variation \cite[2.4]{jmm_mwj:2010} of
Definition~\ref{defn:wecowe}: For any object $a$ of $\cat C{}{}$,
write $[a] \in [\cat C{}{}]$ for the set of objects isomorphic to $a$.  Viewing $[\mathcal{C}]$ as a skeletal category by picking a representative from each $\cat C{}{}$-isomorphism class, 
the matrix for $[\cat C{}{}]$ is the square $(|[\cat C{}{}]| \times
|[\cat C{}{}]|)$-matrix with entries  $[\cat C{}{}]([a],[b]) =
|\cat C{}{}(a,b)|$, $[a], [b] \in [\cat C{}{}]$. A  \emph{\we} for $[\cat
C{}{}]$ is a function $k_{[\cat C{}{}]}^\bullet \colon [\cat C{}{}]
\to \Q$ so that
\begin{equation*}
  \forall [a] \in  [\cat C{}{}] \colon \sum_{[b] \in [\cat C{}{}]} 
  \big|[\cat C{}{}]([a],[b])\big|\cdot k^{[b]}_{[\cat C{}{}]} = 1,
\end{equation*}
and a \emph{co\we} for  $[\cat C{}{}]$ is a function $k^{[\cat
  C{}{}]}_\bullet \colon [\cat C{}{}] \to \Q$ so that 
  \begin{equation*}
  \forall [b] \in  [\cat C{}{}] \colon 
  \sum_{[a] \in [\cat C{}{}]} 
  k_{[a]}^{[\cat C{}{}]}\cdot\big| [\cat C{}{}]([a],[b])\big| = 1.
  \end{equation*}
  The category $\cat C{}{}$ has a \we\ $k^\bullet_{\cat C{}{}}$ if and
  only its set of iso\m\ classes of objects $[\cat C{}{}]$ has a
  \we\ $k^\bullet_{[\cat C{}{}]}$. We can construct one from the other:
\begin{equation*}
    k_{[\cat C{}{}]}^{[b]} = \sum_{y \in [b]}  k_{\cat C{}{}}^y, \qquad
    k_{\cat C{}{}}^b = \frac{1}{\big|[b]\big|} \cdot k_{[\cat C{}{}]}^{[b]}, \qquad
    b \in \Ob{\cat C{}{}}, [b] \in [\cat C{}{}].
\end{equation*}
Similarly, $\cat C{}{}$ has a co\we\ if and only if $[\cat C{}{}]$ has
a co\we . 
Note that the (co)weightings on $\cat C{}{}$ that arise in this manner are necessarily constant on $\cat C{}{}$-isomorphism classes of objects.

If $\cat C{}{}$ is an EI-category, its objects can be arranged in such an order so that the matrix $[\cat C{}{}]$ is upper triangular.   It follows that any finite
$\mathrm{EI}$-category  has a {\em unique\/} \we\ and a
{\em unique\/} co\we\ that are constant on iso\m\ classes of objects
\cite[Lemma~1.3, Theorem~1.4, Lemma~1.12]{leinster08}.

A full subcategory $\mathcal{I}$ of a category $\mathcal{C}$ is a {\em
  left ideal\/} if any $\mathcal{C}$-\m\ whose domain is an object of
$\mathcal{I}$ is an $\mathcal{I}$-\m. For instance, if $\mathcal{C}$
is an $\mathrm{EI}$-category and $a$ an object of $\mathcal{C}$ then
$a//\cat C{}{}$ is a left ideal in $a/\cat C{}{}$ by \cite[Lemma
1.3]{leinster08}.

\begin{thm}\label{lemma:underwt}
  Let $\cat C{}{}$ be a finite $\mathrm{EI}$-category, and let
  $k^\bullet_{\cat C{}{}}$ and $k_\bullet^{\cat C{}{}}$ be the \we\ and
  the co\we\ on $\cat C{}{}$ that are constant on iso\m\ classes of
  objects of $\cat C{}{}$. 
  Then
  \begin{equation*}
     k^a_{\cat C{}{}} = 
    \frac{-\rchi(a//\cat C{}{})}{ |[a]| |\cat C{}{}(a)|},
    \qquad
      k_b^{\cat C{}{}} = 
    \frac{-\rchi(\cat C{}{}//b)}{ |[b]| |\cat C{}{}(b)|}, 
    \qquad a,b \in \Ob{\cat C{}{}},
  \end{equation*}
  and the \Euc\ of $\cat C{}{}$ is
   \begin{equation*}
    \sum_{[a] \in [\cat C{}{}]} 
    \frac{-\rchi(a//\cat C{}{})}{ |\cat C{}{}(a)|} =
     \chi(\cat C{}{}) =
     \sum_{[b] \in [\cat C{}{}]} 
    \frac{-\rchi(\cat C{}{}//b)}{ |\cat C{}{}(b)|}
  \end{equation*}
  where the sums run over the set $[\cat C{}{}]$ of iso\m\ classes of
  objects of $\cat C{}{}$.
\end{thm}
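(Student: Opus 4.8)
The plan is to prove the formula for the \we\ $k^\bullet_{\cat C{}{}}$; the formula for the co\we\ $k_\bullet^{\cat C{}{}}$ then follows by applying the result to the opposite category $\cat C{}{\mathrm{op}}$, since a \we\ for $\cat C{}{\mathrm{op}}$ is a co\we\ for $\cat C{}{}$, and $b//\cat C{}{\mathrm{op}}$ is isomorphic to $\cat C{}{}//b$ for every object $b$, with $|\cat C{}{\mathrm{op}}(b)| = |\cat C{}{}(b)|$.

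Fix an object $a$ of $\cat C{}{}$. By Lemma~\ref{lemma:weundercat} the function $k_{\cat C{}{}}^{\mathrm{cod}(\bullet)}$, which sends an object $\psi$ of the coslice $a/\cat C{}{}$ to $k^{\mathrm{cod}(\psi)}_{\cat C{}{}}$, is a \we\ for $a/\cat C{}{}$; because $k^\bullet_{\cat C{}{}}$ is constant on iso\m\ classes of $\cat C{}{}$, this \we\ is constant on iso\m\ classes of $a/\cat C{}{}$. Now $a/\cat C{}{}$ is a finite $\mathrm{EI}$-category (as is any slice or coslice of a finite $\mathrm{EI}$-category) with initial object $\mathrm{id}_a$, so $\chi(a/\cat C{}{}) = 1$ and hence $\sum_{\psi \in \Ob{a/\cat C{}{}}} k_{\cat C{}{}}^{\mathrm{cod}(\psi)} = 1$. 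Since $a//\cat C{}{}$ is a left ideal in $a/\cat C{}{}$ by \cite[Lemma~1.3]{leinster08}, no $(a/\cat C{}{})$-\m\ runs from an object of $a//\cat C{}{}$ to an object outside $a//\cat C{}{}$, so the defining \we\ sums for objects of $a//\cat C{}{}$ only involve objects of $a//\cat C{}{}$; therefore the restriction of $k_{\cat C{}{}}^{\mathrm{cod}(\bullet)}$ to $\Ob{a//\cat C{}{}}$ is a \we\ for the finite $\mathrm{EI}$-category $a//\cat C{}{}$. As $a//\cat C{}{}$, being $\mathrm{EI}$, also admits a co\we , its \Euc\ is the sum of this \we , i.e.\ $\chi(a//\cat C{}{}) = \sum_{\psi \in \Ob{a//\cat C{}{}}} k_{\cat C{}{}}^{\mathrm{cod}(\psi)}$.

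The objects of $a/\cat C{}{}$ not lying in $a//\cat C{}{}$ are precisely the iso\m s with domain $a$; since $\cat C{}{}$ is an $\mathrm{EI}$-category, for each object $b$ with $b \cong a$ every \m\ in $\cat C{}{}(a,b)$ is an iso\m, so there are exactly $|[a]|\,|\cat C{}{}(a)|$ such iso\m s, each with codomain in $[a]$ and so each contributing the common value $k^a_{\cat C{}{}}$ to the \we\ sum. Subtracting the two identities obtained above yields $1 - \chi(a//\cat C{}{}) = |[a]|\,|\cat C{}{}(a)|\,k^a_{\cat C{}{}}$, that is, $k^a_{\cat C{}{}} = -\rchi(a//\cat C{}{})/(|[a]|\,|\cat C{}{}(a)|)$. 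The \Euc\ formula then follows by grouping $\chi(\cat C{}{}) = \sum_{a} k^a_{\cat C{}{}}$ by iso\m\ classes: each class $[a]$ contributes $|[a]|\,k^a_{\cat C{}{}} = -\rchi(a//\cat C{}{})/|\cat C{}{}(a)|$, and the dual computation with the co\we\ gives the other expression.

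The step I expect to require the most care is verifying that the restriction of the \we\ of $a/\cat C{}{}$ to the left ideal $a//\cat C{}{}$ is again a \we\ and that $\chi(a//\cat C{}{})$ equals its sum: this uses the left-ideal property from \cite{leinster08} together with the observation that $a//\cat C{}{}$, as a full subcategory of the finite $\mathrm{EI}$-category $a/\cat C{}{}$, is itself a finite $\mathrm{EI}$-category and hence has both a \we\ and a co\we\ and a well-defined \Euc . A secondary point to get right is the count $|[a]|\,|\cat C{}{}(a)|$ of iso\m s out of $a$, which relies on the $\mathrm{EI}$ hypothesis to guarantee that every \m\ between isomorphic objects is invertible.
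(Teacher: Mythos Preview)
Your proof is correct and follows essentially the same approach as the paper: both use Lemma~\ref{lemma:weundercat} to obtain the \we\ $k_{\cat C{}{}}^{\mathrm{cod}(\bullet)}$ on $a/\cat C{}{}$, restrict it to the left ideal $a//\cat C{}{}$, compute $\chi(a/\cat C{}{})=1$ via the initial object, and split the sum over iso\m s out of $a$ (counted as $|[a]|\,|\cat C{}{}(a)|$) versus the remaining terms summing to $\chi(a//\cat C{}{})$. Your additional remarks justifying the left-ideal restriction and the iso\m\ count are accurate and make explicit what the paper leaves to a citation and a one-line observation.
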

\begin{proof}
  We shall only prove the statement about the \we\ since the statement
  about the co\we\ is entirely dual.   $\cat C {}{}$ is a finite
  $\mathrm{EI}$-category, so it is easy to see that the coslice categories $a/\cat C{}{}$ and
  $a//\cat C{}{}$ are also finite $\mathrm{EI}$-categories. Thus they
  admit \we s and co\we s, and have well-defined Euler characteristics. Since $a//\cat C{}{}$ is a left ideal in
  $a/\cat C{}{}$, the \we\ for $a/\cat C{}{}$ from
  Lemma~\ref{lemma:weundercat} restricts to a \we\ for $a//\cat C{}{}$
  \cite[Remark 2.6]{jmm_mwj:2010}. The category $a/\cat C{}{}$ has an
  initial object, so it is contractible and has Euler characteristic
  1.  Therefore
  \begin{equation*}
    1 = 
    \sum_{\varphi \in \Ob{a/\cat C{}{}}}  
    k^{\mathrm{cod}(\varphi)}_{\cat C{}{}} =
    |[a]| |\cat C{}{}(a)|  k^{a}_{\cat C{}{}} +
      \sum_{\varphi \in \Ob{a//\cat C{}{}}}  
   k^{\mathrm{cod}(\varphi)}_{\cat C{}{}}=  
    |[a]| |\cat C{}{}(a)|  k^{a}_{\cat C{}{}} + \chi(a // \cat C{}{})
  \end{equation*}
  because the \we\ $k^\bullet_{\cat C{}{}}$ is assumed to be constant
  on the iso\m\ class $[a]$ of $a$.
\end{proof}

The rational functions
 \begin{equation*}
    k^{[a]}_{[\cat C{}{}]} = 
    \frac{-\rchi(a//\cat C{}{})}{ |\cat C{}{}(a)|},
    \qquad
     k_{[b]}^{[\cat C{}{}]} = 
    \frac{-\rchi(\cat C{}{}//b)}{ |\cat C{}{}(b)|}, 
    \qquad [a], [b] \in [\Ob{\cat C{}{}}],
  \end{equation*}
  are the \we\ and the co\we\ for $[\cat C{}{}]$, respectively.


In the case $\cat S{}{}$ is a poset, we sometimes write 
${}_{a \leq}\cat S{}{}$, 
${}_{a<}\cat S{}{}$,
${\cat S{}{}}_{ \leq b}$,
${\cat S{}{}}_{<b}$ for
$a/\cat S{}{}$,
$a//\cat S{}{}$,
$\cat S{}{}/b$,
$\cat S{}{}//b$, respectively. Using this notation, the last part of
Theorem~\ref{lemma:underwt} takes the following form. 

\begin{cor}\label{cor:underwt}
  The \Euc\ of a finite poset  $\cat S{}{}$ is the sum
  \begin{equation*}
     \sum_{a \in \Ob{\cat S{}{}}} -\rchi({}_{a<}\cat S{}{}) = \chi(\cat
     S{}{}) =
     \sum_{b \in \Ob{\cat S{}{}}} -\rchi({\cat S{}{}}_{<b})
  \end{equation*}
  of the negatives of the local reduced \Euc s.
\end{cor}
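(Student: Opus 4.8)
The plan is to derive Corollary~\ref{cor:underwt} as the specialization of Theorem~\ref{lemma:underwt} to the case where $\cat C{}{} = \cat S{}{}$ is a finite poset. First I would note that a poset, viewed as a category with a single \m\ $a \to b$ precisely when $a \leq b$, is a finite $\mathrm{EI}$-category: its only endo\m s are identities, which are auto\m s. Hence Theorem~\ref{lemma:underwt} applies and supplies the unique iso\m-invariant \we\ and co\we\ together with the two expressions for $\chi(\cat S{}{})$.

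Next I would record the features peculiar to posets that collapse those formulas. Since $a \leq b \leq a$ forces $a = b$, every iso\m\ class $[a]$ is the singleton $\{a\}$, so $|[a]| = 1$; and $\cat S{}{}(a)$ consists of the identity alone, so $|\cat S{}{}(a)| = 1$. Substituting $|[a]| = |\cat S{}{}(a)| = 1$ into the last display of Theorem~\ref{lemma:underwt}, the index set $[\cat S{}{}]$ is identified with $\Ob{\cat S{}{}}$ and the denominators disappear, leaving
\[
  \chi(\cat S{}{}) = \sum_{a \in \Ob{\cat S{}{}}} -\rchi(a//\cat S{}{})
  = \sum_{b \in \Ob{\cat S{}{}}} -\rchi(\cat S{}{}//b).
\]

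Finally I would translate into the poset notation fixed just above the corollary, namely $a//\cat S{}{} = {}_{a<}\cat S{}{}$ and $\cat S{}{}//b = {\cat S{}{}}_{<b}$ (a noniso\m\ out of $a$ in a poset being exactly a strict relation $a < c$, so ${}_{a<}\cat S{}{}$ is the full subposet of elements strictly above $a$), which rewrites the two sums in the asserted form. I expect no real obstacle: all the substance is in Theorem~\ref{lemma:underwt}, and what remains is the remark that a poset is a ``rigid'' $\mathrm{EI}$-category so that every $|[a]|$ and every $|\cat S{}{}(a)|$ equals $1$. Alternatively one can argue directly: the coslice ${}_{a\leq}\cat S{}{}$ has $a$ as initial object, hence is contractible with \Euc\ $1$; partitioning its objects into the identity at $a$ and the objects of ${}_{a<}\cat S{}{}$ and applying Lemma~\ref{lemma:weundercat} gives $1 = k^a_{\cat S{}{}} + \chi({}_{a<}\cat S{}{})$, i.e.\ $k^a_{\cat S{}{}} = -\rchi({}_{a<}\cat S{}{})$; summing over $a$, together with the dual argument for the co\we , yields the corollary, though this merely re-proves Theorem~\ref{lemma:underwt} in the poset case.
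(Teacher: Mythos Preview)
Your proposal is correct and matches the paper's approach exactly: the corollary is obtained by specializing Theorem~\ref{lemma:underwt} to a finite poset, where $|[a]|=|\cat S{}{}(a)|=1$ and the coslice/slice notation reduces to ${}_{a<}\cat S{}{}$ and $\cat S{}{}_{<b}$. The paper itself provides no separate proof beyond remarking that the corollary is Theorem~\ref{lemma:underwt} rewritten in poset notation.
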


This reproduces a well-known result from the combinatorial theory of
posets. 


  \section{Homotopy equivalences between categories}
  \label{sec:bouc}

  The famous Quillen's Theorem A provides a sufficiency criterion for a
  functor between two categories to be a homotopy equivalence.  We
  quote this Theorem here not in its full generality, but only for
  the special case that is of interest to us.

  \begin{thm}[Quillen's Theorem~A for inclusions of categories] \label{thm:QuillenA}
    \cite[Theorem~A]{quillen73}
    Let $\cat C{}{}$ be a category and $\cat A{}{}$ a full
    subcategory. The inclusion $\cat A{}{} \hookrightarrow \cat C{}{}$
    is a homotopy equivalence if either $\mathrm{supp}(\bullet/\cat
    A{}{})$ or $\mathrm{supp}(\cat A{}{}/\bullet)$ is empty.
\end{thm}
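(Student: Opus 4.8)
The statement is the special case of Quillen's Theorem~A \cite{quillen73} in which the functor is a full inclusion $\iota\colon\cat A{}{}\hookrightarrow\cat C{}{}$, and the plan is to prove it along Quillen's lines by interposing a comma category $\cat E{}{}$ between $\cat A{}{}$ and $\cat C{}{}$. I first reduce to one of the two hypotheses. Since a category and its opposite have the same classifying space, and since the opposite of the coslice $x/\cat A{}{}$ formed in $\cat C{}{}$ is the slice $\cat A{}{\mathrm{op}}/x$ formed in $\cat C{}{\mathrm{op}}$ relative to the opposite inclusion $\cat A{}{\mathrm{op}}\hookrightarrow\cat C{}{\mathrm{op}}$, the condition $\mathrm{supp}(\bullet/\cat A{}{})=\emptyset$ for $\iota$ is identical to $\mathrm{supp}(\cat A{}{\mathrm{op}}/\bullet)=\emptyset$ for $\iota^{\mathrm{op}}$, and $\mathrm{B}\iota$ is a homotopy equivalence exactly when $\mathrm{B}\iota^{\mathrm{op}}$ is. So I may assume $\mathrm{supp}(\cat A{}{}/\bullet)=\emptyset$, i.e.\ that the slice $\cat A{}{}/x$ is contractible for every object $x$ of $\cat C{}{}$.

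Now let $\cat E{}{}=\iota\downarrow\mathrm{id}_{\cat C{}{}}$ be the category whose objects are the $\cat C{}{}$-\m s $a\xrightarrow{\varphi}x$ with $a$ an object of $\cat A{}{}$, and in which a \m\ $(a\xrightarrow{\varphi}x)\to(a'\xrightarrow{\varphi'}x')$ is a pair $(f,g)$ with $f\in\cat A{}{}(a,a')$, $g\in\cat C{}{}(x,x')$ and $\varphi g=f\varphi'$. There are functors $p\colon\cat E{}{}\to\cat A{}{}$ and $q\colon\cat E{}{}\to\cat C{}{}$ recording the domain and the codomain, and a functor $s\colon\cat A{}{}\to\cat E{}{}$ with $s(a)=(a\xrightarrow{\mathrm{id}_a}a)$ and $s(f)=(f,f)$; one checks that $ps=\mathrm{id}_{\cat A{}{}}$ and $qs=\iota$. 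Sending each object $a\xrightarrow{\varphi}x$ of $\cat E{}{}$ to the \m\ $(\mathrm{id}_a,\varphi)$ defines a natural transformation from the composite endofunctor $\cat E{}{}\xrightarrow{p}\cat A{}{}\xrightarrow{s}\cat E{}{}$ to $\mathrm{id}_{\cat E{}{}}$ --- the naturality square is precisely the relation $\varphi g=f\varphi'$ defining a \m\ of $\cat E{}{}$. Since a natural transformation between functors induces a homotopy between the associated maps of classifying spaces, $\mathrm{B}p$ and $\mathrm{B}s$ are mutually inverse homotopy equivalences, and as the composite $\mathrm{B}\cat A{}{}\xrightarrow{\mathrm{B}s}\mathrm{B}\cat E{}{}\xrightarrow{\mathrm{B}q}\mathrm{B}\cat C{}{}$ equals $\mathrm{B}\iota$, it suffices to prove that $\mathrm{B}q$ is a homotopy equivalence.

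For this I would observe that $\cat E{}{}$ is precisely the Grothendieck construction of the functor $\cat C{}{}\to\mathrm{CAT}$, $x\mapsto\cat A{}{}/x$, in which a $\cat C{}{}$-\m\ acts by post-composition; equivalently, $q$ is an opfibration with fibre $\cat A{}{}/x$ over $x$, the cocartesian lift of $g\colon x\to x'$ at $a\xrightarrow{\varphi}x$ being $(\mathrm{id}_a,g)\colon(a\xrightarrow{\varphi}x)\to(a\xrightarrow{\varphi g}x')$. By Thomason's homotopy colimit theorem, $\mathrm{B}\cat E{}{}\simeq\hocolim_{x\in\cat C{}{}}\mathrm{B}(\cat A{}{}/x)$ and $\mathrm{B}\cat C{}{}\simeq\hocolim_{x\in\cat C{}{}}\ast$, and under these equivalences $\mathrm{B}q$ is the map of homotopy colimits induced by the terminal natural transformation $(\cat A{}{}/x\to\ast)_{x\in\cat C{}{}}$. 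By the standing assumption every $\cat A{}{}/x$ is contractible, so this natural transformation is an objectwise weak equivalence; since a homotopy colimit carries objectwise weak equivalences to weak equivalences, $\mathrm{B}q$ --- and hence $\mathrm{B}\iota$ --- is a homotopy equivalence.

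The only nonformal ingredient in this scheme is the identification of $\mathrm{B}\cat E{}{}$ with the homotopy colimit of the fibres of $q$, and this is the step I expect to be the main obstacle: it is exactly what makes Quillen's Theorem~A a genuine theorem rather than a diagram chase. In a self-contained treatment it would be replaced by Quillen's own bisimplicial-set argument: regard the nerve $N\cat E{}{}$ as a bisimplicial set, filtered in one direction by the simplices of the nerve $N\cat C{}{}$, so that its rows in the other direction are the nerves of the fibres $\cat A{}{}/x$; the canonical map $N\cat E{}{}\to N\cat C{}{}$ (with $N\cat C{}{}$ viewed as constant in the fibre direction) is then a levelwise weak equivalence, and one invokes the standard lemma that a levelwise weak equivalence of bisimplicial sets induces a weak equivalence on diagonal (hence realized) spaces.
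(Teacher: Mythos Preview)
The paper does not prove this statement; Theorem~\ref{thm:QuillenA} is simply quoted with a citation to \cite{quillen73} and used as a black box throughout. Your proposal is a correct outline of the standard proof --- the comma-category interpolation $\cat A{}{}\xrightarrow{s}\cat E{}{}\xrightarrow{q}\cat C{}{}$, the natural-transformation argument that $\mathrm{B}s$ is a homotopy equivalence, and the identification of $q$ as an opfibration whose fibres are the slices $\cat A{}{}/x$ --- and you correctly isolate the only non-formal step as the passage from contractible fibres to a weak equivalence on total spaces, handled either by Thomason's theorem (which the paper itself cites as \cite{thomason79}) or by Quillen's bisimplicial diagonal lemma. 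There is nothing to compare against here, and nothing to correct.
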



We also quote a perhaps less well-known result of Bouc providing a
sufficient condition for an inclusion of posets to be a homotopy
equivalence. 

  \begin{thm}[Bouc's theorem for posets]\label{lemma:bouc} \cite{bouc84a}
    Let $\cat S{}{}$ be a finite poset and $\cat A{}{}$ a subposet.
    The inclusion $\cat A{}{} \hookrightarrow \cat S{}{}$ is a
    homotopy equivalence if either $\mathrm{supp}(\bullet//\cat
    S{}{})$ or $\mathrm{supp}(\cat S{}{}//\bullet)$ is contained in
    $\Ob{\cat A{}{}}$.
  \end{thm}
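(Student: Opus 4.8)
The plan is to deduce the statement from Quillen's Theorem~A (Theorem~\ref{thm:QuillenA}) by deleting the objects of $\cat S{}{}$ outside $\cat A{}{}$ one at a time, from the bottom up. Replacing $\cat S{}{}$ and $\cat A{}{}$ by their opposite posets interchanges the two support conditions $\mathrm{supp}(\bullet // \cat S{}{}) \subseteq \Ob{\cat A{}{}}$ and $\mathrm{supp}(\cat S{}{} // \bullet) \subseteq \Ob{\cat A{}{}}$, and it does not affect whether $\cat A{}{} \hookrightarrow \cat S{}{}$ is a homotopy equivalence, since a finite poset and its opposite have homeomorphic nerves. So it is enough to treat the first case; that is, I may assume that ${}_{a<}\cat S{}{}$ is contractible for every object $a$ of $\cat S{}{}$ that is not an object of $\cat A{}{}$. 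I would then argue by induction on the cardinality $|\Ob{\cat S{}{}} \setminus \Ob{\cat A{}{}}|$, the case of an empty difference being trivial.

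For the inductive step, choose an object $x$ that is \emph{minimal} in the full subposet of $\cat S{}{}$ spanned by the objects not in $\cat A{}{}$, and let $\mathcal{S}'$ be the full subposet of $\cat S{}{}$ on the object set $\Ob{\cat S{}{}} \setminus \{x\}$, so that $\cat A{}{} \subseteq \mathcal{S}' \subseteq \cat S{}{}$ and $\mathcal{S}'$ has one fewer object outside $\cat A{}{}$. First I would check that the inclusion $\mathcal{S}' \hookrightarrow \cat S{}{}$ is a homotopy equivalence by verifying that $\mathrm{supp}(\bullet / \mathcal{S}') = \emptyset$ and invoking Theorem~\ref{thm:QuillenA}: for $z \neq x$ the coslice $z / \mathcal{S}'$ still has $z$ as a least element and so is contractible, while $x / \mathcal{S}' = {}_{x<}\cat S{}{}$ is contractible precisely because $x \notin \Ob{\cat A{}{}} \supseteq \mathrm{supp}(\bullet // \cat S{}{})$. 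Then I would check that the hypothesis descends to the pair $(\mathcal{S}', \cat A{}{})$; granting this, the inductive hypothesis gives that $\cat A{}{} \hookrightarrow \mathcal{S}'$ is a homotopy equivalence, and composing it with $\mathcal{S}' \hookrightarrow \cat S{}{}$ completes the induction.

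To see that the hypothesis descends, let $y$ be an object of $\mathcal{S}'$ that is not in $\cat A{}{}$. Then $y \in \Ob{\cat S{}{}} \setminus \Ob{\cat A{}{}}$ with $y \neq x$, so minimality of $x$ forces $y \not< x$, i.e.\ $x$ is not an object of ${}_{y<}\cat S{}{}$; hence ${}_{y<}\mathcal{S}' = {}_{y<}\cat S{}{}$, which is contractible by the hypothesis on $(\cat S{}{}, \cat A{}{})$. This stability of the hypothesis under deletion of $x$ is the one delicate point of the argument and the step I would watch most carefully: deleting a single vertex from a contractible poset can easily destroy contractibility, so it is essential that $x$ be chosen minimal in the complement of $\cat A{}{}$, which is exactly what guarantees that $x$ never occurs in ${}_{y<}\cat S{}{}$ for any other vertex $y$ outside $\cat A{}{}$, and hence that all those coslices survive the deletion unchanged.
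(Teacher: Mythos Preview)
Your proof is correct. The paper does not give a separate proof of this statement---it is cited to Bouc---but immediately afterwards proves the generalization to finite $\mathrm{EI}$-categories (Theorem~\ref{lemma:boucEI2}), which of course specializes to posets. That argument is organized differently from yours: rather than deleting one object at a time and invoking Quillen's Theorem~A at each step, the paper applies Theorem~A only \emph{once}, to the inclusion $\cat A{}{} \hookrightarrow \cat S{}{}$ itself, after verifying by downward induction on the height of $x$ that every coslice $x/\cat A{}{}$ is contractible. The inductive step shows that $x//\cat A{}{} \hookrightarrow x//\cat S{}{}$ is a homotopy equivalence (via Theorem~A again, with the inductive hypothesis supplying contractibility of the coslices $y/\cat A{}{}$ for $y$ of greater height); then for $x \notin \Ob{\cat A{}{}}$ one has $x/\cat A{}{} = x//\cat A{}{} \simeq x//\cat S{}{}$, which is contractible by hypothesis. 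Your vertex-by-vertex deletion is concrete and arguably the more elementary route for posets; the paper's height induction, on the other hand, carries over verbatim to $\mathrm{EI}$-categories, where deleting a single object from an isomorphism class is awkward---and that generalization is what the paper is really after.
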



  In this section we generalize Bouc's theorem for poset inclusions to
  finite $\mathrm{EI}$-category inclusions.  This  boils down to a repurposing of Quillen's Theorem~A in terms of our notion of support, and should be thought of as a statement about what sort of objects control the homotopy type of a finite EI-category.


    



    



  \begin{thm}[Bouc's theorem for finite $\mathrm{EI}$-categories]
    \label{lemma:boucEI2} 
    Let $\cat C{}{}$ be a finite $\mathrm{EI}$-category and $\cat
    A{}{}$ a full subcategory that is closed under iso\m s.  The
    inclusion of $\cat A{}{} \hookrightarrow \cat C{}{}$ is a homotopy
    equivalence if either $\mathrm{supp}(\bullet // \cat C{}{})$ or
    $\mathrm{supp}(\cat C{}{} // \bullet)$ is contained in $\Ob{\cat
      A{}{}}$.
  \end{thm}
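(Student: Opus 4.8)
Let me think about how to prove this. We have a finite EI-category $\cat C{}{}$ and a full subcategory $\cat A{}{}$ closed under isomorphisms. We want to show: if $\mathrm{supp}(\bullet // \cat C{}{}) \subseteq \Ob{\cat A{}{}}$, then the inclusion is a homotopy equivalence. (The other case is dual.)

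The poset version (Theorem \ref{lemma:bouc}) is known. For posets, the strategy: if $x \notin \Ob{\cat A{}{}}$, then $x // \cat S{}{}$ (i.e., ${}_{x<}\cat S{}{}$) is contractible, which lets you do a "poset surgery" — remove the objects not in $\cat A{}{}$ one at a time, in an order that respects the poset, each removal being a homotopy equivalence because of the contractibility of the relevant slice/coslice. Actually the standard argument (Quillen, Bouc) is: order $\Ob{\cat C{}{}} \setminus \Ob{\cat A{}{}}$ and build up a chain of full subcategories, removing a maximal element at each stage; when you remove a maximal element $x$, the relevant coslice $x // \cat C{}{}$ (which only sees things above $x$) is contractible by hypothesis, and Quillen's Theorem A applies.

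Now, how does EI-ness help? The key point is that in a finite EI-category, there's a well-founded "rank" order on isomorphism classes of objects: $[a] \prec [b]$ if there's a non-isomorphism $a \to b$. EI-ness guarantees this is a partial order (no cycles), since a cycle of non-isomorphisms would force endomorphisms that aren't isomorphisms. So I would:

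First, I would set up the rank order on $[\cat C{}{}]$ and enumerate the iso-classes in $\Ob{\cat C{}{}} \setminus \Ob{\cat A{}{}}$ as $[x_1], [x_2], \ldots, [x_n]$ so that $[x_i] \prec [x_j] \Rightarrow i < j$ — that is, larger rank comes later. (Equivalently, pick the enumeration so each $[x_i]$ is "minimal among the remaining" — I'd actually want to remove from the top, so let me take $[x_1]$ of maximal rank among the complement, etc. Let me be careful: I want, at each stage, to remove an iso-class $[x]$ from the current subcategory such that there are no non-isomorphisms from $x$ to any other remaining object outside $\cat A{}{}$... hmm, actually I need: no non-isos from $x$ to anything still present that's being removed later. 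Removing maximal-rank classes first achieves this.) Then I would define $\cat C{}{} = \cat C0 \supseteq \cat C1 \supseteq \cdots \supseteq \cat Cn = \cat A{}{}$, where $\cat Ci$ is the full subcategory on objects not in the first $i$ removed iso-classes (note each $\cat Ci$ is still closed under isomorphism, hence still a finite EI-category). Second, I would show each inclusion $\cat Ci \hookrightarrow \cat C{i-1}$ is a homotopy equivalence using Quillen's Theorem A: I need that for the object $x = x_i$ being removed, the coslice $x / \cat Ci$ — careful, I want $\mathrm{supp}(\bullet / \cat Ci) $ restricted to... — actually the cleanest formulation: apply Quillen A to the full inclusion $\cat Ci \hookrightarrow \cat C{i-1}$, needing $x / \cat Ci$ contractible for every $x \in \Ob{\cat C{i-1}} \setminus \Ob{\cat Ci}$, which is exactly the objects isomorphic to $x_i$. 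Third, I would verify this contractibility: $x_i / \cat Ci = x_i // \cat Ci$ because there are no isomorphisms $x_i \to $ (object of $\cat Ci$) (anything isomorphic to $x_i$ was removed, $\cat Ci$ being iso-closed), and since all objects of $\cat C{i-1}$ that are "$\succ$-comparable upward from $x_i$ and outside $\cat A{}{}$" have already been removed, one checks $x_i // \cat Ci = x_i // \cat C{}{}$ (the coslice only involves objects receiving a non-iso from $x_i$; those objects not in $\cat A{}{}$ all have rank $\geq$ that of $x_i$, hence were among $x_1,\dots,x_i$, but a non-iso $x_i \to x_j$ for $j < i$ with $[x_j]$ of $\geq$ rank is impossible, and a non-iso to $x_i$ itself would give a non-iso endomorphism — so actually all codomains appearing in $x_i // \cat C{}{}$ lie in $\cat A{}{}$, hence are still in $\cat Ci$). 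Therefore $x_i // \cat Ci = x_i // \cat C{}{}$, which is contractible by the hypothesis $\mathrm{supp}(\bullet // \cat C{}{}) \subseteq \Ob{\cat A{}{}}$ since $x_i \notin \Ob{\cat A{}{}}$. Composing the homotopy equivalences $\cat Cn = \cat A{}{} \hookrightarrow \cdots \hookrightarrow \cat C0 = \cat C{}{}$ finishes the proof.

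The main obstacle I expect is the bookkeeping in the third step: correctly arguing that $x_i // \cat Ci$ equals $x_i // \cat C{}{}$ (so that the hypothesis on $\cat C{}{}$ transfers to the intermediate categories $\cat Ci$), which hinges on choosing the enumeration of removed iso-classes compatibly with the EI-rank order and on carefully distinguishing isomorphisms from non-isomorphisms (this is where EI is essential, and where the poset proof is trivially easy but the EI case needs the iso-closedness of $\cat A{}{}$ and the $\prec$-ordering). One should also double-check that Quillen's Theorem A as quoted (Theorem \ref{thm:QuillenA}, which requires $\mathrm{supp}(\bullet / \cat A{}{})$ empty for the full inclusion $\cat A{}{} \hookrightarrow \cat C{}{}$) is being applied to the pair $(\cat C{i-1}, \cat Ci)$ with the right coslice: indeed $x/\cat Ci$ is contractible for $x\in\Ob{\cat C{i-1}}$ — automatically when $x\in\Ob{\cat Ci}$ since then $x/\cat Ci$ has an initial object, and by the above when $x\notin\Ob{\cat Ci}$ — so $\mathrm{supp}(\bullet/\cat Ci)=\emptyset$ as a functor on $\cat C{i-1}^{\mathrm{op}}$, and Theorem \ref{thm:QuillenA} applies. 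The dual statement follows by replacing $\cat C{}{}$ with $\cat C{}{\mathrm{op}}$ throughout.
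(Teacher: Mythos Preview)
Your overall strategy is sound, but the bookkeeping in your third step is tangled. You need the \emph{first} enumeration you wrote down, namely $[x_i] \prec [x_j] \Rightarrow i < j$ (a linear extension of $\prec$, so you remove $\prec$-\emph{minimal} iso-classes first); the parenthetical about ``removing from the top'' points the wrong way and should be discarded. With the correct enumeration the identity $x_i // \cat Ci = x_i // \cat C{}{}$ does hold, but not because ``all codomains lie in $\cat A{}{}$''---that claim is false in general. The correct argument: if $x_i \to y$ is a non-isomorphism and $y \notin \Ob{\cat A{}{}}$, then $[y] = [x_j]$ for some $j$, and the non-iso gives $[x_i] \prec [x_j]$, hence $i < j$, so $[x_j]$ has not yet been removed and $y \in \Ob{\cat Ci}$. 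With this fix your chain of Quillen-A steps goes through.

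Your route is genuinely different from the paper's. The paper applies Quillen's Theorem~A just \emph{once}, directly to $\cat A{}{} \hookrightarrow \cat C{}{}$, by proving that $x/\cat A{}{}$ is contractible for every object $x$ of $\cat C{}{}$ via downward induction on a height function $\mathrm{ht}(x)$ (the maximal length of a chain of non-isomorphisms terminating at $x$). The inductive step applies Quillen~A to the functor $x//\iota \colon x//\cat A{}{} \to x//\cat C{}{}$, whose coslices $(x \to y)/(x//\iota) = y/\cat A{}{}$ are contractible by the inductive hypothesis; when $x \notin \Ob{\cat A{}{}}$ one then has $x/\cat A{}{} = x//\cat A{}{} \simeq x//\cat C{}{}$, contractible by assumption. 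Both proofs rest on the same well-founded order forced by the EI condition, but the paper analyses coslices of the fixed pair $(\cat A{}{},\cat C{}{})$ with a nested use of Quillen~A inside the induction, while you interpolate a chain $\cat A{}{} = \cat Cn \hookrightarrow \cdots \hookrightarrow \cat C0 = \cat C{}{}$ and apply Quillen~A $n$ times. Your approach is closer to Bouc's original poset surgery; the paper's avoids the enumeration and the intermediate categories at the cost of the slightly subtler nested Quillen-A step.
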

  \begin{proof}
    Assume that $\Ob{\cat A{}{}}$ contains the support
    $\mathrm{supp}(\bullet//\cat C{}{})$ of the functor $\bullet//\cat
    C{}{}$. The claim is that the inclusion functor $\func{\iota}{\cat
      A{}{}}{\cat C{}{}}$ is a homotopy equivalence. It suffices to
    show that the coslice $x/\cat A{}{}$ of $\cat A{}{}$ is
    contractible for every object $x$ of $\cat C{}{}$
    (Theorem~\ref{thm:QuillenA}).

    For any object $x$ of $\cat C{}{}$ define the {\em height\/} of
    $x$, $\mathrm{ht}(x)$, to be the maximal length of any path
    \begin{equation*}
      x_0 \to x_1 \to \cdots \to x_h = x
    \end{equation*}
    of noniso\m s in $\cat C{}{}$ terminating at $x$. The height of
    $x$ is finite since there are no circuits in paths of noniso\m s
    \cite[Lemma 1.3]{leinster08}. If there is a noniso\m\ from $x_0$
    to $x_1$, then $\mathrm{ht}(x_0)<\mathrm{ht}(x_1)$. Define
    $\mathrm{ht}(\cat C{}{})$ to be the maximal height of any object
    of $\cat C{}{}$.

    Suppose that $x$ is an object of $\cat C{}{}$ of maximal height,
    $\mathrm{ht}(\cat C{}{})$.  Then $x//\cat C{}{}$ is the empty
    category because there is no noniso\m\ from $x$ to any object of
    $\cat C{}{}$. The empty category is not contractible, so $x \in
    \mathrm{supp}(\bullet//\cat C{}{}) \subset \Ob{\cat A{}{}}$ is an
    object of $\cat A{}{}$. Then $x/\cat A{}{}$ is contractible with
    the identity of $x$ as an initial object.

    Let now $x$ be any object of $\cat C{}{}$ such that the coslice
    $y/\cat A{}{}$ of $\cat A{}{}$ is contractible for all objects $y$
    of height greater than $\mathrm{ht}(x)$. Then the functor
   \begin{equation*}
     x//\iota \colon x//\cat A{}{} \to x//\cat C{}{}
   \end{equation*}
   is a homotopy equivalence by Theorem~\ref{thm:QuillenA} because the
   category
   \begin{equation*}
     (x \to y)/(x//\iota) = y/\cat A{}{}
   \end{equation*}
   is contractible for every object $x \to y$ of $x//\cat C{}{}$. In
   the case $x$ is an object of $\cat A{}{}$, $x/\cat A{}{}$ is
   contractible as before. In the case $x$ is not an object of $\cat
   A{}{}$, $x/\cat A{}{} = x//\cat A{}{}$ because there can be no
   iso\m\ from $x$ to an object of $\cat A{}{}$ as $\cat A{}{}$ is
   closed under iso\m s. We now have
   \begin{equation*}
     x/\cat A{}{} = x//\cat A{}{} \simeq x//\cat C{}{}
   \end{equation*}
   and $x//\cat C{}{}$ is contractible since $x \not\in
   \mathrm{supp}(\bullet//\cat C{}{})$. Thus $x / \cat A{}{}$ is also
   contractible.

   By finite downward induction on $\mathrm{ht}(x)$ we see that $x /
   \cat A{}{}$ is contractible for all objects $x$ of $\cat C{}{}$.
  \end{proof}

\begin{rmk}\label{rmk:BoucInterpretation}
Theorem~\ref{lemma:boucEI2} is the main technical tool of this paper, but it should also be thought of as a descriptive statement about control of  homotopy type of  finite EI-categories.  We will  use the following reinterpretation:  If $\cat C{}{}$ is a finite EI-category, then either (a) those objects whose proper slice categories are contractible do not, as a whole, contribute to the overall homotopy type of $\cat C{}{}$, or (b) the same holds for those objects with contractible proper coslice categories.  (Note that the union of these two classes cannot be discarded without affecting the homotopy type, as the example of the poset $x<y$ shows.)  In our search for homotopy equivalences between $p$-subgroup categories, we will therefore concentrate on identifying those objects with contractible proper (co)slices.  Since the reduced Euler characteristic of a contractible category is $0$, we will use the combinatorics developed in the previous section to direct our search in what follows.
\end{rmk}

  \section{Subgroup categories for $p$-groups}
  \label{sec:pgroups}

  In this section we collect several  technical examples that will allow us to apply Theorem~\ref{lemma:boucEI2} more generally.

For any small category $\cat C{}{}$ and any set $D \subset \Ob{\cat
  C{}{}}$ of objects of $\cat C{}{}$, we let $\cat C{}D$ denote the
full subcategory of $\cat C{}{}$ generated by the objects in the set
$D$. For instance, if $H \lneq K$ are $p$-subgroups of $G$, then $\cat
FG{[H,K)}$ denotes the full subcategory of $\cat FG{}$ with objects
the set of all subgroups $L$ of $G$ for which $H \leq L \lneq K$.

In the following lemma we consider
 \begin{description}
 \item[$\cat SP{(1,P)}$] the poset of nonidentity  proper
   subgroups of $P$


\item[$\cat OP{[1,P)}$] the full subcategory of $\cat OP{}$
  with objects all proper subgroups of $P$

\item[$\catt FP{(1,P)}$] the full subcategory of $\catt FP{}$
  with objects all nonidentity proper subgroups of $P$
 \end{description}
 for $P$ a nonidentity $p$-group. We write $\mu$ for the M\"obius
 function of the poset $\cat SP{}$ \cite[\S 3.7]{stanley97}, and we
 abbreviate $\mu(1,K)$ to $\mu(K)$ for any subgroup $K$ of $P$.

\begin{lemma}\label{lemma:(1,P)contract}
  Let $P$ be a nonidentity $p$-subgroup. Then
  \begin{enumerate}[(a)]

  \item 
  \begin{itemize}
  \item $\rchi(\cat SP{(1,P)}) = \mu(P)$
  \item $\rchi(\catt FP{(1,P)}) =
   \rchi(\cat FP{(1,P)})= \frac{\mu(P)}{|P : Z(P)|} $
    \item$\chi(\cat
    OP{[1,P)}) = \left\{\begin{array}{ll}p^{-1}&\textrm{P is cyclic}\\ 1&\textrm{else}\end{array}\right.$
    \end{itemize} \label{lemma:(1,P)contractB} 

 \item \label{lemma:(1,P)contractA}
    $\text{$\cat SP{(1,P)}$ is noncontractible} \iff
    \text{$P$ is elementary abelian}$.
 

  \item \label{lemma:(1,P)contractD} $\cat OP{[1,P)}$ is homotopy
    equivalent to $\cat OV{[1,V)}$, where $V= P/\Phi(P)$ is the
    Frattini quotient of $P$.


\item  \label{lemma:(1,P)contractE}
    $\text{$\catt FP{(1,P)}$ is noncontractible} \iff
    \text{$P$ is elementary abelian}$,

  \end{enumerate}

\end{lemma}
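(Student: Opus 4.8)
The plan is to treat the four items more or less independently, though (b) and (e) will follow by combining (a) with Quillen's observation \eqref{eq:SK*contract}. For part (a), the key point is that all four categories $\cat SP{(1,P)}$, $\cat FP{(1,P)}$, $\catt FP{(1,P)}$, $\cat OP{[1,P)}$ are finite $\mathrm{EI}$-categories, so Theorem~\ref{lemma:underwt} and Corollary~\ref{cor:underwt} apply. For the poset $\cat SP{(1,P)}$ I would compute $\chi$ by adding the two endpoints: the poset $\cat SP{[1,P]}$ of all subgroups of $P$ is contractible (it has an initial and a terminal object), and removing the bottom $1$ and the top $P$ from a poset with Euler characteristic $1$ changes $\rchi$ by exactly $\mu(1,P)=\mu(P)$ by the standard \Mb-inversion bookkeeping (this is essentially Corollary~\ref{cor:underwt} applied twice, or the observation that $\rchi$ of the order complex of an open interval in a poset is its \Mb\ value). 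For $\catt FP{(1,P)}$ I would use the co\we\ formula of Theorem~\ref{lemma:underwt}: the automorphism group $\catt FP{}(K)=C_P(K)\backslash N_P(K)/K$ has order $|N_P(K):K|\,|C_P(K)K:C_P(K)|^{-1}$, but more cleanly one notes that for $K=P$ the endomorphism monoid is trivial while $\cat FP{}(K,P)$ is a single $\mathrm{Inn}(P)$-orbit; the factor $|P:Z(P)|^{-1}$ is exactly $|\mathrm{Inn}(P)|^{-1}=|\cat FP{}(P,P)|^{-1}$ relating the fusion and exterior-quotient computations, and the equality $\rchi(\catt FP{(1,P)})=\rchi(\cat FP{(1,P)})$ I expect to fall out of the fact that $\cat FP{}\to\catt FP{}$ on the open interval is a quotient by a free $\mathrm{Inn}$-action on morphism sets. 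For $\cat OP{[1,P)}$ the claim $\chi=p^{-1}$ (cyclic) or $1$ (noncyclic) I would get from part (c).

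For part (c), the strategy is to exhibit a deformation retraction (or an adjunction, in the sense of the remark after Definition~\ref{defn:htpyequiv}) from $\cat OP{[1,P)}$ onto its full subcategory on subgroups containing $\Phi(P)$. The point is that in the orbit category $\cat OP{}$, for any proper subgroup $K$ there is a canonical map $K\to K\Phi(P)$, and since $\Phi(P)$ is normal this assignment is functorial and compatible with the $N_P$-action defining $\cat OP{}$-morphisms; a proper subgroup $K$ satisfies $K\Phi(P)\neq P$ because $\Phi(P)$ consists of nongenerators, so the retraction lands inside $\cat OP{[1,P)}$. The full subcategory on $\{K: \Phi(P)\leq K\lneq P\}$ is then isomorphic to $\cat O{P/\Phi(P)}{[1,P/\Phi(P))} = \cat OV{[1,V)}$ by the correspondence theorem (one checks the transporter/orbit morphism sets match, using $N_G(H,K)$-formalism modulo $\Phi(P)$). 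This reduces everything to $V$ elementary abelian, for which $\cat OV{[1,V)}$ has, by \eqref{eq:SK*contract} and a direct weighting computation, Euler characteristic $p^{-1}$ when $\dim V=1$ (the category is a single object with automorphism group of order $p-1$... more precisely one computes $\chi$ directly) and $1$ when $\dim V\geq 2$.

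Parts (b) and (e): by Quillen \cite[Proposition 2.4]{quillen78}, i.e.\ the criterion $\text{$\cat SK*$ noncontractible}\Rightarrow O_pK=1$ together with the displayed principle, I would argue that $\cat SP{(1,P)}$ is the Brown poset $\cat SP*$ of the $p$-group $P$ with its top element $P$ removed; by (a) it is noncontractible iff $\rchi=\mu(P)\neq 0$. The classical fact is that $\mu(P)=\mu(1,P)\neq 0$ for a $p$-group $P$ forces $P$ to be elementary abelian (this is a theorem of Weisner/Hall on the \Mb\ function of the subgroup lattice: $\mu(1,P)\neq 0$ implies $P$ is a complemented, hence elementary abelian, $p$-group, and then $\mu(P)=(-1)^n p^{\binom n2}$ for $\dim P=n$). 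I would cite this and spell out the elementary-abelian value. For (e) the same mechanism applies via $\rchi(\catt FP{(1,P)})=\mu(P)|P:Z(P)|^{-1}$ from (a): this is nonzero iff $\mu(P)\neq 0$ iff $P$ is elementary abelian. The main obstacle I anticipate is getting the exterior-quotient bookkeeping in (a) exactly right --- keeping track of which morphism sets carry a free $\mathrm{Inn}$-action and confirming that the $|P:Z(P)|^{-1}$ normalization threads correctly through the co\we\ formula of Theorem~\ref{lemma:underwt} --- and, in (c), verifying carefully that the retraction $K\mapsto K\Phi(P)$ is genuinely functorial on $\cat OP{}$-morphisms rather than merely on the underlying poset.
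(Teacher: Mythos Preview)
There is a genuine gap in your argument for parts (b) and (e). You write ``by (a) it is noncontractible iff $\rchi=\mu(P)\neq 0$'', but this biconditional is not justified: a category with reduced \Euc\ equal to $0$ need not be contractible. Part (a) only gives you the forward direction (if $P$ is elementary abelian then $\mu(P)\neq 0$, hence $\rchi\neq 0$, hence noncontractible). For the converse you must actually \emph{prove} that $\cat SP{(1,P)}$ is contractible when $P$ is not elementary abelian, and vanishing of $\rchi$ is not enough. Your invocation of Quillen's criterion \eqref{eq:SK*contract} does not help either: that result concerns $\cat SK*=\cat SK{(1,K]}$ for a group $K$, not the open interval $\cat SP{(1,P)}$ with the top element removed; for $K=P$ a $p$-group it only tells you that $\cat SP{(1,P]}$ is contractible, which is obvious anyway since $P$ is terminal.

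The paper supplies exactly the missing contractibility argument, and it is essentially the same idea you already use in part (c): when $P$ is not elementary abelian the Frattini subgroup $\Phi(P)$ is nontrivial, and $Q\mapsto Q\Phi(P)$ gives an adjunction between $\cat SP{(1,P)}$ and $\cat SP{[\Phi(P),P)}$, the latter being contractible via a further adjunction with $\cat SP{[1,P)}$. The same scheme handles (e), using $\Phi(P)\cap Z(P)$ in place of $\Phi(P)$ so that the map $Q\mapsto Q(\Phi(P)\cap Z(P))$ is compatible with the $\catt FP{}$-structure. So the fix is straightforward: reuse your Frattini retraction from (c) in (b) and (e) rather than appealing to the \Euc . (A minor side issue: in your sketch of the cyclic case of $\cat OV{[1,V)}$ you say the automorphism group has order $p-1$; in fact $\cat OV{}(1)=V$ has order $p$, which is what gives $\chi=p^{-1}$.)
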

\begin{proof}

  \noindent \eqref{lemma:(1,P)contractB} It is well known that
  $\rchi(\cat SP{(1,P)})= \rchi(1,P) = \mu(P)$ \cite[3.8.5,
  3.8.6]{stanley97} \cite[2.3]{jmm_mwj:2010}.  The formulas for
  $\rchi(\catt FP{(1,P)})$ and $\chi(\cat OP{[1,P)})$ follow from
  \cite[Remark~2.6, Example~3.7, Theorem~7.7,
  Theorem~4.1]{jmm_mwj:2010}: If $k_\bullet^{\catt F{}{}}$ is a co\we\
  for $\catt FP{(1,P]}$ then
  \begin{equation*}
    1 = \chi(\catt FP{(1,P]}) =  \chi(\catt FP{(1,P)}) + k_P^{\catt
      F{}{}} =  \chi(\catt FP{(1,P)}) +   \frac{-\mu(P)}{|P : Z(P)|}
  \end{equation*}
  because $\chi(\catt FP{(1,P]})$ is contractible, with $P$ as terminal
  object, containing the right ideal $\chi(\catt
  FP{(1,P)})$. Similarly,  If $k_\bullet^{\cat O{}{}}$ is a co\we\
  for $\cat OP{}$ then
  \begin{equation*}
    1 = \chi(\cat OP{}) = 
    \chi(\cat OP{[1,P)}) + k_P^{\cat O{}{}} = \chi(\cat OP{[1,P)}) +
    \begin{cases}
      1-\frac{1}{p} & \text{$P$ is cyclic} \\
      0 & \text{$P$ is not cyclic}
    \end{cases}
  \end{equation*}
  and the expression for the \Euc\ of $\cat OP{[1,P)}$ follows.

  \noindent \eqref{lemma:(1,P)contractA} If $P$ is elementary abelian,
  the poset $\cat SP{(1,P)}$ is noncontractible because its reduced
  \Euc\ is nonzero according to \eqref{lemma:(1,P)contractB}.  If $P$
  is not elementary abelian, the Frattini subgroup $\Phi(P)$ is
  nontrivial \cite[Chp 5, Theorem 1.3]{gorenstein68}. There are
  adjoint functors
  \begin{equation*}
    \xymatrix@1{
         \cat SP{(1,P)}\ar@<.75ex>[r]^-L &
        \cat SP{[\Phi(P),P)}  \ar@<.75ex>[l]^-R \ar@<.75ex>[r]^-R &
         \cat SP{[1,P)}\ar@<.75ex>[l]^-L }     
  \end{equation*}
  where $QL=Q\Phi(P)$ and $QR = Q$ for $Q \leq P$. 
  Observe that $Q \lneqq P \Longrightarrow Q\Phi(P) \lneqq P$ because
  the Frattini subgroup is the group of nongenerators of $P$. The
  poset on the right, $\cat SP{[1,P)}$, is contractible with the
  trivial group as an initial object. The poset on the left, $ \cat
  SP{(1,P)}$, is therefore also contractible. Alternatively, the
  natural transformations $Q \leq Q\Phi(P) \geq \Phi(P)$, $1 \leq Q
  \lneqq P$, define a homotopy from the identity of $\cat SP{(1,P)}$
  to a constant map.


\noindent \eqref{lemma:(1,P)contractD}
There are functors 
  \begin{equation*}
    \xymatrix@1{
         \cat OP{[1,P)}\ar@<.75ex>[r]^-L &
        \cat OP{[\Phi(P),P)}  \ar@<.75ex>[l]^-R & 
        \cat O{P/\Phi(P)}{[1,P/\Phi(P))} \ar[l]_{\cong}^U }     
  \end{equation*}
  where $R$ and $L$ are adjoint functors and $U$ is an iso\m . The
  functors $R$ and $L$ are given by $QL=Q\Phi(P)$ and $QR = Q$ for $Q
  \leq P$.
  The category in the middle, $\cat OP{[\Phi(P),P)}$, is isomorphic to
  the category $\cat O{P/\Phi(P)}{[1,P/\Phi(P))}$. To see this,
  observe that all supergroups of the Frattini subgroup $\Phi(P)$ are normal, so that $\cat OP{}(Q_1,Q_2) = P/Q_2 =
  \frac{P/\Phi(P)}{Q_2/\Phi(P)} = \cat
  O{P/\Phi(P)}{}(Q_1/\Phi(P),Q_2/\Phi(P))$ when $Q_1$ and $Q_2$ both
  contain $\Phi(P)$. 


  \noindent \eqref{lemma:(1,P)contractE} If $P$ is elementary abelian,
  then $\catt FP{} = \cat SP{}$ and $\catt FP{(1,P)} = \cat
  SP{(1,P)}$, is noncontractible by \eqref{lemma:(1,P)contractA}. If
  $P$ is not elementary abelian, the Frattini subgroup $\Phi(P)$ is a
  nontrivial normal subgroup and so is its intersection with the
  center $Z(P)$ of $P$ \cite[5.2.1]{robinson:groups}. There are
  adjoint equivalences of categories
\begin{equation*}
  \xymatrix@1{
    {\catt FP{(1,P)}} \ar@<.75ex>[r]^-{L} & 
    {\catt FP{[\Phi(P) \cap Z(P),P)}} \ar@<.75ex>[l]^-{R}
    \ar@<.75ex>[r]^-{R} &
    {\catt FP{[1,P)}} \ar@<.75ex>[l]^-{L} }  
\end{equation*}
where $QL=Q\Phi(P)$ and $QR = Q$ for $Q \lneqq P$.  The category to the
right, $\catt FP{[1,P)}$, is contractible because it has the trivial
group as an initial object. The category to the left, $\catt
FP{(1,P)}$, is therefore also contractible.  
\end{proof}

One might be led by
Lemma~\ref{lemma:(1,P)contract}.\eqref{lemma:(1,P)contractB} to
suspect that, for any nonidentity $p$-group $P$,
\begin{equation*}
  \text{$\cat OP{[1,P)}$ is noncontractible} \Longrightarrow
  \text{$P$ is cyclic}
\end{equation*}
or, equivalently, for any nonidentity elementary abelian
$p$-group $V$,
\begin{equation*}
  \text{$\cat OV{[1,V)}$ is noncontractible} \Longrightarrow
  \mathrm{rank}(V)=1
\end{equation*}
To see that these two statements are equivalent, recall that the
Frattini quotient of $P$ is cyclic precisely when $P$ itself is cyclic
\cite[Chp 5, Corollary~1.2]{gorenstein68} and use
Lemma~\ref{lemma:(1,P)contract}.\eqref{lemma:(1,P)contractD}. However,
Example~\ref{exmp:OCpn} demonstrates that these statements are false.

\begin{exmp}\label{exmp:OCpn}
  Let $V = C_p^r$ be the elementary abelian $p$-group of rank $r
  \geq 1$.  The objects of the category $\cat O{V}{[1,V)}$
  are the proper subgroups of $V$, and the set of \m s from $H
  \lneqq V$ to $K \lneqq V$ is
  \begin{equation*}
    \cat O{V}{[1,V)}(H,K) =
    \begin{cases}
      V/K & \text{if $H \leq K$} \\
      \emptyset & \text{otherwise}
    \end{cases}
  \end{equation*}
  with composition in this category induced from composition in the
  abelian group $V$.



  If the rank $r=1$, then the category $\cat O{V}{[1,V)} = \cat
  O{V}{\{1\}}$ is the cyclic group $V$, which is not contractible.

  Let us now explore the category $\cat O{V}{[1,V)}$ in the case where the
  rank $r > 1$. There is an obvious functor
   \begin{equation*}
     \pi \colon \cat OV{[1,V)} \to \cat SV{[1,V)}
   \end{equation*}
   to the poset of proper subgroups of $V$. For any proper subgroup
   $K$ of $V$, the $\pi$-slice over $K$ is $\pi/K = \cat OV{[1,K]}$.
   There is an adjunction
  \begin{equation*}
    \xymatrix@1{ {\cat OV{[1,K]}} \ar@<.75ex>[r]^-{L} &
        {\cat OV{\{K\}}} \ar@<.75ex>[l]^-{R}}, \qquad
      RL  \stackrel{\varepsilon}{=} 1_{\cat OV{\{K\}}}, \qquad
      1_{\cat OV{[1,K]}} \stackrel{\eta}{\Rightarrow} LR,
  \end{equation*}
  where $HL=K$ and $KR=K$. The functor $R$ includes the full
  subcategory of $\cat OV{}$ with $K$ as its only object into the full
  subcategory of all subgroups of $K$. The functor $L$ is the
  projection $\cat OV{}(H_1,H_2) = V/H_2 \to V/K = \cat OV{}(K,K)$,
  $H_1 \leq H_2 \leq K$.  Thus the category $\cat OV{[1,K]}$ is
  homotopy equivalent to the category $\cat OV{\{K\}}$ which is the
  group $V/K$.  
  The composite functor spectral sequence \cite[pp
  155--157]{gabriel_zisman67} \cite[Proof of Proposition
  2.3]{dw:codec} 
  \begin{equation}\label{eq:ss}
    E^2_{st} = H_s(\cat SV{[1,V)} ; H_t(V/\bullet;\F_p)) \Longrightarrow 
    H_{s+t}(\cat OV{[1,V)};\F_p)
  \end{equation}
  associated to the functor $\pi$ provides information about the
  homology groups of the category $\cat OV{[1,V)}$. Here, we write
  $H_s(\cat SV{[1,V)}; H_t(V/\bullet))$ for the $s$th left derived of
  the functor $\colim H_t(V/\bullet)$. In concrete terms, these groups
  are the homology groups of the
  normalized chain complex \cite[Theorem VIII.6.1]{maclane} of the
  simplicial abelian group $\coprod_* H_t(V/\bullet)$
  \cite[XII.5.5]{bk},
  \begin{equation*}
      0  \leftarrow
      \bigoplus_{0 \leq L_0 < V} H_t(V/L_0) \xleftarrow{\partial_1}  
      \bigoplus_{0 \leq L_0 <L_1 < V} H_t(V/L_0) \xleftarrow{\partial_2}  
      \cdots \xleftarrow{\partial_s}
      \bigoplus_{0 \leq L_0 <L_1 \cdots < L_s < V} H_t(V/L_0)
      \xleftarrow{\partial_{s+1}} \cdots
  \end{equation*}
  with boundary homo\m\ $\partial_s$ is defined by deleting single
  entries of the $s$-flag $L_0 < L_1 < \cdots < L_s$ and applying
  $H_t(V/L_0) \to H_t(V/L_1)$ in the case of deletion of the first entry.
  This chain complex is trivial in degrees $> r-1$ so that the
  spectral sequence \eqref{eq:ss} is concentrated in the vertical band
  $0 \leq s \leq r-1$.

   Take $r=2$ and $p=2$ and consider the category $\cat OV{[1,V)}$ where
   $V$ is the Klein $4$-group.  The objects of $\cat
   OV{[1,V)}$ are the identity subgroup, $\{0\}$, and three subgroups,
   $L_1$, $L_2$, and $L_3$, of order $2$. The category $\cat
   OV{[1,V)}$ is
  \begin{equation*}
    \xymatrix{
      L_1 \ar@(ul,ur)^{V/L_1} & 
      L_2 \ar@(ul,ur)^{V/L_2} &
      L_3 \ar@(ul,ur)^{V/L_3} \\
      & {\{0\}} 
      \ar[ul]^-{V/L_1} \ar[u]_(.65){V/L_2} \ar[ur]_-{V/L_3}
      \ar@(dl,dr)_{V} } \qquad
    \zeta =
    \begin{pmatrix}
      4 & 2 & 2 & 2 \\ 0 & 2 & 0 & 0 \\
      0 & 0 & 2 & 0 \\ 0 & 0 & 0 & 2
    \end{pmatrix} \qquad
    k_\bullet^{\cat O{}{}} =
      (1/4,  1/4 , 1/4 , 1/4) \qquad \chi(\cat OV{[1,V)}) = 1
  \end{equation*}
  with composition induced from addition in the abelian group $V$.
  The first quadrant spectral sequence \eqref{eq:ss} is concentrated
  on the two vertical lines $s=0$ and $s=1$ so that all differentials
  are trivial. The groups $E^2_{0t}=E^\infty_{0t}$ and
  $E^2_{1t}=E^\infty_{1t}$ are the homology groups of the normalized
  simplicial replacement chain complex
  \begin{equation*}
    \cdots \leftarrow 0 \leftarrow
    H_t(V/0) \oplus H_t(V/L_1) \oplus H_t(V/L_2) \oplus H_t(V/L_3)
    \leftarrow H_t(V/0) \oplus  H_t(V/0) \oplus  H_t(V/0) \leftarrow 0 \to
    \leftarrow 
  \end{equation*}
  concentrated in degrees $0$ and $1$. Since $H_t(V/0)$ has dimension
  $t+1$, $H_t(V/L_i)$, $i=1,2,3$, is $1$-dimensional, the term
  $E^\infty_{1,t}$ has dimension at least $2t-1$, and consequently
  $\dim_{\F_2} H_{t+1}(\cat OV{[1,V)};\F_2) \geq 2t-1$ for all degrees
  $t \geq 1$.

  The above argument is easily seen to work for any prime $p$ and we
  conclude that $\dim_{\F_p} H_{t+1}(\cat OV{[1,V)};\F_p) \geq pt-1$
  for all degrees $t \geq 1$ when the rank $r=2$. Thus $\cat
  OV{[1,V)}$ is noncontractible when $V$ has rank $r=2$.
\end{exmp}

Here are few remarks about the spectral sequence \eqref{eq:ss} for
arbitrary prime $p$ and rank $r \geq 2$.  When $t=0$, $E^2_{s0} =
H_s(\cat SV{[1,V)};\F_p)$, so that $E^2_{00} = \F_p$ and $E^2_{s0}=0$
for $s>0$, as $\cat SV{[1,V)}$ is contractible.  When $t>0$, we
conjecture, based on computer calculations, that $E^2_{st}=0$ except
for $s=r-1$. We have not been able to prove this conjecture.

\section{Brown posets and transporter categories}
\label{sec:brownposet}

We now begin the process of proving the results summarized in Theorem~\ref{thm:summary}, which will take up the remainder of the paper.

Let $G$ be a finite group of order divisible by $p$ and $\cat SG{}$
the poset of $p$-subgroups of $G$. The Brown poset for $G$ is the
subposet $\cat SG* = \cat SG{(1,G]}$ of nonidentity $p$-subgroups of
$G$.  We  show that the homotopy type of $\cat SG*$ is determined by either the elementary abelian $p$-subgoups of $G$, or the $G$-radical subgroups of $G$, as well as showing that the full subcategory of $p$-selfcentralizing  subgroups of $G$ has its homotopy determined by the $G$-radical, $p$-selfcentralizing  subgroups. The results of this section are not new, but they provide the template of our argument, which we outline now:

  For each claim of Theorem~\ref{thm:SG}, we break the proof into two separate parts:  The computation of the Euler characteristic of the general (co)slice of the larger category,  and actual proof of homotopy equivalence through an application of Bouc's Theorem~\ref{lemma:boucEI2}.  These parts are labelled {\bf[EC]} and {\bf[HE]}, respectively.  The truth of the result is shown in the second part, whereas the Euler characteristics calculation is not, strictly speaking, necessary for the proof of the Theorem.  Instead, it is  offered as a moral  argument as to why we should expect the result to be true, following Remark~\ref{rmk:BoucInterpretation}.  
  
	Let us consider as a toy  example Part (a) of Theorem~\ref{thm:SG}, which is Quillen's result \cite{quillen78} that the homotopy type of $\cat SG*$ is determined by the subposet $\cat SG{*+\mathrm{eab}}$ of elementary abelian $p$-subgroups.   By Theorem~\ref{lemma:boucEI2}, we must find those objects $K$ of $\cat SG*$ whose proper slice categories $\cat SG*//K$ are not contractible.  We will ultimately see that says that the objects are precisely the elementary abelian $p$-subgroups of $G$ (the ``second part'' of the proof), but first we pretend not to know this and ask what sort of subgroups we should consider.  A good first guess would be those objects whose proper slice categories have nonzero reduced Euler characteristic, as those proper slice categories are necessarily noncontractible.  This does not guarantee that all other objects have contractible proper slice categories, but with a little more work, this turns out to be the case.  This basic chain of reasoning will be repeated for all of the similar results that follow.


  \begin{thm} \label{thm:SG} \cite{bouc84a} \cite{quillen78} 
    The following inclusions are homotopy equivalences:
    \[
    \mathbf{(a)}\ \cat SG{*+\mathrm{eab}} \hookrightarrow
    \cat SG*\qquad\qquad
    \mathbf{(b)}\ \cat SG{*+\mathrm{rad}} \hookrightarrow \cat SG*\qquad\qquad
    \mathbf{(c)}\ \cat SG{\mathrm{sfc}+\mathrm{rad}} \hookrightarrow \cat
    SG{\mathrm{sfc}}
    \]
  \end{thm}
  \begin{proof}  We now flesh out the details of the above paragraph:
  \begin{itemize}
  \item[{\bf(a)}]
  
    {\bf[EC]} The coweighting for $\cat SG*$ can be expressed in two different ways by \cite[Theorem~1.1.(1)]{jmm_mwj:2010} and
    Theorem~\ref{lemma:underwt},
    \begin{equation*}
       -\rchi(\cat SK{(1,K)})  = k^K = -\rchi(\cat SG*//K),
    \end{equation*}
    so that  the categories $\cat SG*//K$ and $\cat SK{(1,K)}$ have
    identical \Euc s. By Lemma~\ref{lemma:(1,P)contract}.\eqref{lemma:(1,P)contractA}, $\rchi\left(\cat SK{(1,K)}\right)=0$ unless $K$ is elementary abelian.  This suggests that the class of subgroups with noncontractible proper slice categories is precisely the elementary abelian $p$-subgroups of $G$.
    
    \noindent{\bf[HE]} Indeed, the categories  $\cat SG*//K$ and $\cat SK{(1,K)}$ not only have the same Euler characteristics, they are themselves identical! 
    Lemma~\ref{lemma:(1,P)contract}.\eqref{lemma:(1,P)contractA} then gives us more information:
    \begin{equation*}
      \mathrm{supp}(\cat SG*//\bullet) = \{k \in \Ob{\cat SG*} \mid 
      \text{$K$ is elementary abelian} \} = \Ob{\cat SG{*+\mathrm{eab}}}
    \end{equation*}
     and Bouc's Theorem~\ref{lemma:boucEI2} shows that the inclusion of
    $\cat SG{*+\mathrm{eab}}$ into $\cat SG*$ is a homotopy equivalence.

  \item[{\bf(b)}]
    {\bf[EC]} The weighting for $\cat SG*$  can be expressed in two different ways by \cite[Theorem~1.3.(1)]{jmm_mwj:2010} and
    Theorem~\ref{lemma:underwt}:
    \begin{equation*}
       -\rchi(\cat S{\cat OG{}(H)}*)  = k^H_{\cat S{}{}} 
       = -\rchi(H//\cat SG*).
    \end{equation*}
    In particular,  the categories $H//\cat SG*$ and $\cat S{\cat
      OG{}(H)}*$ have identical \Euc s. By Property~\eqref{eq:Grad}, if $\cat S{\cat OG{}(H)}*$ is not contractible, then $H$ must be $G$-radical.  Therefore the class of subgroups whose proper coslice category has nonzero Euler characteristic is contained in the class of $G$-radical subgroups.

   \noindent{\bf[HE]} We show that this equality of reduce Euler characteristics reflects a homotopy equivalence $H//\cat SG*\simeq\cat S{\cat OG{}(H)}*$.  For any nonidentity
    $p$-subgroup $H$ of $G$, there are functors
    \begin{equation*}
      \xymatrix@1{  H/\cat SG* \ar@<.75ex>[r]^-{r_H} &
        {\cat S{\cat OG{}(H)}{}} \ar@<.75ex>[l]^-{i_H} }, \qquad
     \xymatrix@1{  H//\cat SG* \ar@<.75ex>[r]^-{r_H} &
        {\cat S{\cat OG{}(H)}{*}} \ar@<.75ex>[l]^-{i_H} }
    \end{equation*}
    given by $Kr_H = N_K(H)/H$ for all $p$-supergroups $K$ of $H$ and
    $\overline{K}i_H = K$ when $\overline{K}=K/H$ and $H \leq K \leq
    N_G(H)$ (\cite[Lemma 6.1]{quillen78}). The composite functor
    $i_Hr_H$ is the identity of $\cat S{\cat OG{}(H)}{}$ and there is
    a natural transformation from $r_Hi_H:K\mapsto N_K(H)$ to the identity
    functor of $H/\cat SG*$. This shows that these functors are
    homotopy equivalences of categories.
    By Property~\eqref{eq:Grad},
    \begin{multline*}
      \hspace{\leftmargin}\mathrm{supp}(\bullet//\cat SG*) =
      \{ H \in \Ob{\cat SG*} \mid \text{$\cat S{\cat OG{}(H)}*$ is
        noncontractible} \} \\
        \subseteq
          \{ H \in \Ob{\cat SG*} \mid \text{$H$ is $G$-radical} \}
          = \Ob{\cat SG{*+\mathrm{rad}}}
    \end{multline*}
    and Bouc's Theorem~\ref{lemma:boucEI2} shows that the inclusion of
    $\cat SG{*+\mathrm{rad}}$ into $\cat SG*$ is a homotopy equivalence.

\item[{\bf(c)}]    Since any $p$-supergroup of a $p$-selfcentralizing $G$-subgroup is
    itself $p$-selfcentralizing, $H//\cat SG{\mathrm{sfc}}~=~H//\cat
    SG*$ for any $p$-selfcentralizing subgroup $H$ of $G$. The result then follows from Part (b).
    \end{itemize}
  \end{proof}

  \begin{exmp}
    If $G = C_2 \times \Sigma_3$ and $p=2$, then $\cat
    SG{\mathrm{sfc}}$ is a discrete poset consisting of the $3$ \syl
    2s, while $\cat SG*$ is contractible since $O_2G = C_2$ is
    nontrivial. Thus the inclusion $\cat SG{\mathrm{sfc}} \to \cat
    SG*$ is {\em not\/} a homotopy equivalence.
  \end{exmp}

  The following proposition points out that the largest normal
  $p$-subgroup is the smallest $G$-radical $p$-subgroup.  It implies
  that the poset $\cat SG{*+\mathrm{rad}}$ has a smallest element in the case
  $O_pG$ is nontrivial.  In light of Theorem~\ref{thm:SG}(b), this could be thought of as the essential ingredient that goes into Quillen's Property~\ref{eq:SK*contract}. (We thank Andy Chermak for the proof.)

  \begin{prop}\label{prop:OpG}
    Any $G$-radical $p$-subgroup of $G$ contains the $G$-radical
    $p$-subgroup $O_pG$.
  \end{prop}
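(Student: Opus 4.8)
The plan is to show that if $H$ is a $G$-radical $p$-subgroup, then $O_pG \leq H$. The key identity to exploit is the characterization $H = O_pN_G(H)$ from Definition~\ref{defn:Grad}. First I would observe that $O_pG$ is a normal $p$-subgroup of $G$, hence is contained in every Sylow $p$-subgroup of $G$, and in particular $O_pG \leq N_G(H)$ for any $p$-subgroup $H$ (indeed $O_pG$ normalizes everything in sight since it is normal in $G$). So the natural move is to consider the subgroup $HO_pG$ generated by $H$ and $O_pG$; since $O_pG \trianglelefteq G$, this is a subgroup, and it is a $p$-group because both $H$ and $O_pG$ are $p$-groups and $O_pG$ is normal.

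The crux is then to show $HO_pG \leq O_pN_G(H)$, for then $HO_pG \leq O_pN_G(H) = H$ forces $O_pG \leq H$. To see this, I would argue that $HO_pG$ is a normal $p$-subgroup of $N_G(H)$: it is a $p$-subgroup as just noted, and it is normalized by $N_G(H)$ because $N_G(H)$ normalizes $H$ (by definition) and normalizes $O_pG$ (since $O_pG \trianglelefteq G$), hence normalizes the group they generate. Therefore $HO_pG \leq O_pN_G(H)$ by maximality of $O_pN_G(H)$ among normal $p$-subgroups of $N_G(H)$. Combining with $O_pN_G(H) = H$ (the $G$-radical hypothesis) gives $HO_pG \leq H$, i.e.\ $O_pG \leq H$, as desired.

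I do not expect any serious obstacle here; the argument is a short chain of elementary facts about normal $p$-subgroups, and the only thing to be careful about is making sure $HO_pG$ really is a subgroup (which is immediate from normality of $O_pG$) and that the maximality property of $O_p(-)$ is being applied to $N_G(H)$ and not to $G$. One could also phrase this without forming $HO_pG$ explicitly: since $O_pG$ centralizes nothing in particular but is normal in $G$, the image of $O_pG$ in $N_G(H)/H$ is a normal $p$-subgroup, hence trivial by $G$-radicality (using the equivalent condition $O_p\cat OG{}(H) = 1$, i.e.\ $O_p(N_G(H)/H) = 1$), which again gives $O_pG \leq H$. Either formulation works; I would present whichever reads more cleanly, probably the first.
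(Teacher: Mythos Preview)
Your argument has a genuine gap: the claim that $O_pG \leq N_G(H)$ is false in general. Normality of $O_pG$ in $G$ means that every subgroup of $G$ normalizes $O_pG$, not that $O_pG$ normalizes every subgroup. For a concrete counterexample take $G=S_4$, $p=2$, and $H=\langle(12)\rangle$: here $O_2G=V_4$, and $(13)(24)\in V_4$ conjugates $(12)$ to $(34)$, so $V_4\not\leq N_G(H)$. In this example $|HO_pG|=8$ while $|N_G(H)|=4$, so $HO_pG$ is not even a subgroup of $N_G(H)$ and the maximality step cannot be applied. Your alternate phrasing has the same defect: speaking of ``the image of $O_pG$ in $N_G(H)/H$'' presupposes $O_pG\leq N_G(H)$.

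The paper's fix is to replace $HO_pG$ by $N_{(O_pG)H}(H)=(O_pG)H\cap N_G(H)$. This subgroup \emph{is} contained in $N_G(H)$, is a $p$-group, and is normal in $N_G(H)$ (since anything normalizing $H$ also normalizes $O_pG$ and hence $(O_pG)H$, so it normalizes the intersection). The missing idea in your approach is the normalizer-growth property of $p$-groups: if $O_pG\not\leq H$ then $H\lneq(O_pG)H$, and a proper subgroup of a finite $p$-group is strictly contained in its normalizer there, so $H\lneq N_{(O_pG)H}(H)$. This produces a normal $p$-subgroup of $N_G(H)$ strictly larger than $H$, contradicting $O_pN_G(H)=H$.
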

  \begin{proof}
    It is clear that $O_pG$ is a normal $G$-radical $p$-subgroup.
    Let $H$ be a $p$-subgroup of $G$ not containing $O_pG$.  The
    normalizer of $H$ in the $p$-subgroup $(O_pG)H$ is normal in
    $N_G(H)$ for any element of $G$ normalizing $H$ normalizes
    $(O_pG)H$. Since $N_{(O_pG)H}(H)$ is a normal $p$-subgroup of
    $N_G(H)$ strictly larger than $H$, the $p$-subgroup $H$ is not
    $G$-radical.
  \end{proof}

We close this section by moving from $p$-subgroup posets to more general EI-categories.  Let $\cat TG{}$ be the transporter category of $p$-subgroups of $G$.

  \begin{thm}\label{prop:TG}
    The following inclusions are homotopy equivalences:
    \[
    {\bf(a)}\ \cat TG{*+\mathrm{eab}} \hookrightarrow\cat TG*\qquad\qquad
    {\bf(b)}\ \cat TG{*+\mathrm{rad}} \hookrightarrow \cat TG*\qquad\qquad
    {\bf(c)}\ \cat TG{\mathrm{sfc}+\mathrm{rad}} \hookrightarrow \cat TG{\mathrm{sfc}}
    \]
  \end{thm}
  \begin{proof}
    Every morphism of $\cat TG*$ is both epi and mono, so it follows that the (co)slice categories of objects should be identifiable with the Brown posets of certain groups measuring relating those objects to $G$.  With this in mind, the argument follows that of Theorem~\ref{thm:SG} closely.
    \begin{itemize}
    \item[{\bf(a)}]     {\bf[EC]} The coweighting on $[\cat TG*]$  is computed in \cite[Theorem~1.1.(2)]{jmm_mwj:2010}.   Theorem~\ref{lemma:underwt} gives an alternate calculation of the coweighting in terms of Euler characteristics of proper slice categories:
     \begin{equation*}
       \frac{-\rchi(\cat SK{(1,K)})}{|\cat TG*(K)|}  = 
       k_{[K]}^{[\cat T{}{}]} 
       = \frac{-\rchi(\cat TG*//K)}{|\cat TG*(K)|}.
    \end{equation*}
      Lemma~\ref{lemma:(1,P)contract}.\eqref{lemma:(1,P)contractB} then implies that $\rchi(\cat TG*//K)$ is nonzero iff $K$ is  elementary abelian.
    
    \noindent{\bf[HE]}  In fact, Lemma~\ref{lemma:(1,P)contract} says more:  $\cat SK{(1,K)}$ is \emph{noncontractible} iff $K$ is elementary abelian.  Our goal is then to show that the equality of reduced Euler characteristics $\rchi(\cat SK{(1,K)})=\rchi(\cat TG*//K)$ reflects a homotopy equivalence $\cat SK{(1,K)}=\cat TG*//K$; once this has been accomplished, Theorem~\ref{lemma:boucEI2} will complete the result.
    
    There are functors
    
    \begin{equation*}
      \xymatrix@1{ \cat SK{(1,K]} \ar@<.75ex>[r]^-{r_K} &
        {\cat TG*/K} \ar@<.75ex>[l]^-{i_K} }, \qquad
      \xymatrix@1{ \cat SK{(1,K)} \ar@<.75ex>[r]^-{r_K} &
        {\cat TG*//K} \ar@<.75ex>[l]^-{i_K} }, 
    \end{equation*}
    
    given by $Hr_K=(H\xrightarrow{1} K)$ and $(H\xrightarrow g K)i_K=H^g$.  Clearly these are equivalences of categories, so we have our desired homotopy equivalence $\cat SK{(1,K)}=\cat TG*//K$ and the result is proved.
    

    \item[{\bf(b)}] {\bf[EC]} The weighting of $[\cat TG*]$ was computed in  \cite[Theorem~1.3.(2)]{jmm_mwj:2010}.  Comparing this to the alternate calculation of the weighting
    Theorem~\ref{lemma:underwt}, we have
     \begin{equation*}
       \frac{-\rchi(\cat S{\cat OG{}(H)}*)}{|\cat TG*(H)|}  = 
       k^{[H]}_{[\cat T{}{}]} 
       = \frac{-\rchi(H//\cat TG*)}{|\cat TG*(H)|}.
    \end{equation*}
     Property~\eqref{eq:Grad} implies that $\rchi(H//\cat TG*)\neq 0$ implies that $H$ is $G$-radical.
     
     \noindent{\bf[HE]}  If we can show that there is a homotopy equivalence $\cat S{\cat OG{}(H)}*\simeq H//\cat TG*$, the full strength of  Property~\eqref{eq:Grad} will yield $\mathrm{supp}(\bullet//\cat TG*)$ is contained in the class of $G$-radical subgroup, so Theorem~\ref{lemma:boucEI2} will give the result.  There are functors  
     
         \begin{equation*}
      \xymatrix@1{  H/\cat TG* \ar@<.75ex>[r]^-{r_H} &
        {\cat S{\cat OG{}(H)}{}} \ar@<.75ex>[l]^-{i_H} }, \qquad
     \xymatrix@1{  H//\cat TG* \ar@<.75ex>[r]^-{r_H} &
        {\cat S{\cat OG{}(H)}{*}} \ar@<.75ex>[l]^-{i_H} }
    \end{equation*}
    
    given by $(H\xrightarrow{g} K)r_H=N_K(H^g)^{g^{-1}}/H$ and $\overline K i_H=(H\xrightarrow{1}K)$ where $\overline K=K/H$ and we have $H~\leq~K~\leq~N_G(H)$.  Clearly $i_Hr_h=\mathrm{id}_{\cat S{\cat OG{}(H)}{*}}$, and we have a natural transformation $\eta:r_Hi_H~\Rightarrow~\mathrm{id}_{H//\cat TG*}$ induced by the inclusion $N_K(H^g)^{g^{-1}}\leq N_G(H)$.  Thus the two categories are homotopy equivalent, and the result is proved.


	\item[{\bf(c)}]  Follows from Part (b) and the observation that supergroups of $p$-selfcentralizing subgroups of $G$ are themselves $p$-selfcentralizing.
	 
   \end{itemize}

  \end{proof}

  Suppose that $\cat C{}{}$ is a small category and $X,Y \colon \cat
  C{}{} \to \mathrm{CAT}$ are functors with values in the category
  $\mathrm{CAT}$ of small categories. If there is a natural
  transformation from $X$ to $Y$ with components $X(c) \to Y(c)$, $c
  \in \Ob{\cat C{}{}}$, that are all homotopy equivalences, then the
  induced functor $\int_{\cat C{}{}} X \to \int_{\cat C{}{}} Y$ of
  Grothendieck constructions is a homotopy equivalence. This follows
  from Thomason's homotopy colimit theorem \cite{thomason79} and
  homotopy invariance of the homotopy colimit \cite[Ch.\ XII, \S4,
  Homotopy Lemma 4.2]{bk}.  As the inclusions of Theorem~\ref{thm:SG}
  are $G$-equivariant inclusions of $G$-categories and $\cat TG* =
  (\cat SG*)_{hG}$ etc. we obtain an alternative proof of
  Proposition~\ref{prop:TG}. Similarly, if $O_pG$ is nontrivial, there
  is a homotopy equivalence $G \hookrightarrow \cat
  TG{*+\mathrm{rad}}$ induced by the $G$-equivariant homotopy
  equivalence $\ast \hookrightarrow \cat SG{*+\mathrm{rad}}$ of
  Proposition~\ref{prop:OpG}.

  \section{Frobenius categories}
  \label{sec:frob}

  Let $P$ be a finite $p$-group and $\cat F{}{}$ a Frobenius
  $P$-category.   In this section we show that the homotopy type of $\cat F{}*$ is determined by the elementary abelian subgroups of $S$.  

We will need the following facts:
  \begin{itemize}
  \item All \m s in  $\cat F{}{}$ are mono\m s, which implies
  \item For any $K\leq P$, the categories $\cat F{}*/K$ and  $\cat F{}*//K$ are \emph{thin}, i.e., there is at most one morphism between any two objects
  \item The co\we\ for $\cat F{}*$ vanishes off the elementary abelian
    subgroups \cite[Theorem~7.5]{jmm_mwj:2010}
  \end{itemize}

\begin{thm}\label{prop:FeabtoF*II}
  The inclusion $\cat F{}{*+\mathrm{eab}} \to \cat F{}{*}$  is
  a homotopy equivalence. 
\end{thm}
\begin{proof}$\left.\right.$
  \begin{itemize}
  \item[{\bf[EC]}] We compute the coweighting on $[\cat F{}*]$ using both  \cite[Theorem~7.5]{jmm_mwj:2010} and Theorem~\ref{lemma:underwt}:
      \begin{equation*}
      \frac{-\rchi(\cat SK{(1,K)})}{|\cat F{}*(K)|} =
      k^{[\cat F{}*]}_{[K]} =
      \frac{-\rchi(\cat F{}*//K)}{|\cat F{}*(K)|} 
    \end{equation*}
     Therefore $\cat SK{(1,K)}$ and $\cat F{}*//K$ have identical \Euc
    s. By   Lemma~\ref{lemma:(1,P)contract}.\eqref{lemma:(1,P)contractA}, $\rchi(\cat F{}*//K)\neq 0$ implies $K$ is elementary abelian.
    
   \item[{\bf[HE]}] Indeed, there are functors
  \begin{equation*}
      \xymatrix@1{  \cat F{}*/K \ar@<.75ex>[r]^-{r_K} &
        {\cat S{K}{(1,K]}} \ar@<.75ex>[l]^-{i_K} }, \qquad
     \xymatrix@1{  \cat F{}*//K \ar@<.75ex>[r]^-{r_K} &
        {\cat SK{(1,K)}} \ar@<.75ex>[l]^-{i_K} }
    \end{equation*}
The functor $r_K$ takes $\varphi \in \cat F{}*(H,K)$ to its image
$H^\varphi$ in $K$. The functor $i_K$ takes $H \leq K$ to the
inclusion $H \hookrightarrow K$ of $H$ into $K$. Obviously, $i_Kr_K$
is the identity functor of $\cat SK{(1,K]}$, and there is a natural
transformation from the identity functor to the endofunctor $r_Ki_K:(H
\xrightarrow{\varphi} K)\mapsto (H^\varphi \hookrightarrow K)$ of
$\cat F{}*/K$. This shows that $r_K$ and $i_K$ are homotopy
equivalences between $\cat F{}*/K$ and $\cat SK{(1,K]}$. Their
restrictions are  homotopy
equivalences between $\cat F{}*//K$ and $\cat SK{(1,K)}$.
     By the full strength of
    Lemma~\ref{lemma:(1,P)contract}.\eqref{lemma:(1,P)contractA},
    \begin{equation*}
      \mathrm{supp}(\cat F{}*//\bullet) =  \Ob{\cat F{}{*+\mathrm{eab}}}
    \end{equation*}
    and Bouc's Theorem~\ref{lemma:boucEI2} shows that the inclusion of
    $\cat F{}{*+\mathrm{eab}}$ into $\cat F{}*$ is a homotopy
    equivalence.
\end{itemize}
\end{proof}

In the course of the proof of Theorem~\ref{prop:FeabtoF*II} we
saw that the homotopy type of the category $\cat F{}{*}//K$ of
$\cat F{}*$-noniso\m s to $K$ depends only on $K$, not on $\cat
F{}{}$. This reflects the curious fact that the shape of the Frobenius $P$-category is able to detect some algebraic information of the underlying $p$-group.



We know of no formula for the \we\ of a {\em general\/} Frobenius
category $\cat F{}{}$. There is an explicit formula in
\cite[Theorem~1.3.(3)]{jmm_mwj:2010} for the \we\ of the Frobenius
category $\cat FG{}$ { associated to a finite group\/} $G$, but we have not
been able to determine the support of this \we\ or describe the
categories $H//\cat FG*$.

  \section{Orbit categories}
  \label{sec:orbit}

  Let $G$ be a finite group of order divisible by $p$ and  $\cat
  OG{}$ the orbit category of $p$-subgroups of $G$. 

We will need the following facts:
 \begin{itemize}
 \item The trivial subgroup is not initial in $\cat OG{}$
 \item All \m s in  $\cat OG{}$ are epi\m s, therefore
 \item The categories $H/\cat OG*$ and $H//\cat OG*$ are thin
  \item The \we\ for $\cat OG{}$ vanishes off  the $G$-radical
  $p$-subgroups of $G$ \cite[Proposition 3.14]{jmm_mwj:2010}
  \item The co\we\ for   $\cat OG{}$ vanishes off the cyclic
    $p$-subgroups \cite[Theorem 4.1]{jmm_mwj:2010}
  \end{itemize}

  \begin{thm}\label{thm:GroupOrbitRadical}
    The following inclusions are homotopy equivalences:
     \[
     {\bf(a)}\ \cat OG{\mathrm{rad}} \hookrightarrow \cat OG{}\qquad\qquad
     {\bf(b)}\ \cat OG{*+\mathrm{rad}} \hookrightarrow \cat OG{*}\qquad\qquad
     {\bf(c)}\ \cat OG{\mathrm{sfc}+\mathrm{rad}} \hookrightarrow \cat
    OG{\mathrm{sfc}}
    \]
  \end{thm}
  \begin{proof} 
  The setup for each claim is identical.
  \begin{itemize}
  \item[{\bf[EC]}] The two expressions for the weighting for $[\cat OG{}]$ from  \cite[Equation
    (3.15)]{jmm_mwj:2010} and Theorem~\ref{lemma:underwt} yield
 \begin{equation*}
   \frac{-\rchi(\cat S{\cat OG{}(H)}*)}{|\cat OG{}(H)|} =
   k^{[H]}_{[\cat OG{}]} = 
   \frac{-\rchi(H // \cat OG{})}{|\cat OG{}(H)|},
  \end{equation*}
  so that $\cat S{\cat OG{}(H)}*$ and $H // \cat OG{}$ have
  identical \Euc s.  Since $\rchi(\cat S{\cat OG{}(H)}*)\neq 0$ implies $H$ is $G$-radical (Property~\eqref{eq:Grad}), each claim is at least plausible.
  
  \item[{\bf[HE]}] We show that the equality of reduced Euler characteristics reflects a homotopy equivalence $\cat S{\cat OG{}(H)}*\simeq H//\cat OG{}$; the result will then follow from the contractibility of $\cat S{\cat OG{}(H)}*$ by Property~\eqref{eq:Grad} and Theorem~\ref{lemma:boucEI2}.  For any nonidentity
    $p$-subgroup $H$ of $G$, there are functors
   \begin{equation*} 
        r_H \colon H/\cat OG{} \to \cat S{\cat OG{}(H)}{}, \qquad 
        r_H \colon H//\cat OG{} \to \cat S{\cat OG{}(H)}{*}.
    \end{equation*} 
    The functor $r_H$ takes $gK \in \cat OG{}(H,K) = N_G(H,K)/K$ to
    the subgroup $N_{{}^gK}(H)/H$ of $\cat OG{}(H) = N_G(H)/H$. Let
    $L$ be a $p$-subgroup such that $H \leq L \leq N_G(H)$ and let
    $\overline{L} = L/H$ be the image of $L$ in $N_G(H)/H = \cat
    OG{}(H)$. The category $\overline{L}/r_H$ is the full subcategory
    of $\cat OG{}/H$ generated by all \m s $gK \in \cat OG{}(H,K)$
    such that $L \leq N_{{}^gK}(H)$. The inclusion of $H$ into $L$ is
    an object of $\overline{L}/r_H$ as $L=N_L(H)$.  Note that the \m\
    $gK \colon H \to K$ extends to a \m\ $gK \colon L \to K$ because
    $L^g \leq N_{{}^gK}(H)^g = N_K(H^g) \leq K$. There is thus a \m\
    \begin{equation*}
      \xymatrix{
        & H \ar@{_(->}[dl] \ar[dr]^{gK} \\
        L \ar[rr]_{gK} && K }
    \end{equation*}
    in $\overline{L}/r_H$. This shows that the inclusion $H
    \hookrightarrow L$ is an initial object of $\overline{L}/r_H$. By
    Quillen's Theorem~A (Theorem~\ref{thm:QuillenA}), the functor
    $r_H$ is a homotopy equivalence from $H/\cat OG{}$ to $\cat S{\cat
      OG{}(H)}{}$.  The same argument shows that $r_H$ restricts to a
    homotopy equivalence from $H//\cat OG{}$ to $\cat S{\cat
      OG{}(H)}*$.  Thus $\mathrm{supp}(\bullet//\cat OG{})~\subset~\Ob{\cat OG{\mathrm{rad}}}$ 
     and Part (a) is proved.
     
    Since $\cat OG*$ and $\cat OG{\mathrm{sfc}}$ are left ideals in
    $\cat OG{}$, $H//\cat OG{*} = H// \cat OG{}$ and $H//\cat
    OG{\mathrm{sfc}} = H// \cat OG{}$ for any nonidentity,
    respectively, $p$-selfcentralizing  subgroup $H$ of $G$. By
    property \eqref{eq:Grad},
    \begin{equation*}
      \mathrm{supp}(\bullet // \cat OG{*}) \subset
      \Ob{\cat OG{*+\mathrm{rad}}}, \qquad
      \mathrm{supp}(\bullet // \cat OG{\mathrm{sfc}}) \subset
      \Ob{\cat OG{\mathrm{sfc}+\mathrm{rad}}}
    \end{equation*}
    proving (b) and (c).
    \end{itemize}
      \end{proof} 


    It seems that there should be a dual result to Theorem~\ref{thm:GroupOrbitRadical} involving certain ``small'' subgroups in place of the ``large'' $G$-radical class.  More precisely, there should be a theorem whose proof uses slices in place of the coslices of the previous argument.  The relevant class of subgroups to consider would then be those contained in $\mathrm{supp}(\cat OG{}//\bullet)$.  However, we cannot identify this class of subgroups at this point, and indeed experimental evidence leads us to conjecture that \emph{all} $p$-subgroups will necessarily be contained in the support.  If this conjecture holds,  the  dual theorem would reduce to the tautology $\cat OG{}\simeq\cat OG{}$, which would not be particularly enlightening.

  \section{Exterior quotients of Frobenius categories}
  \label{sec:tildeF}

  Let $P$ be a nonidentity finite $p$-group, $\cat F{}{}$ a Frobenius
  $P$-category, and $\catt F{}{}$ the exterior quotient of $\cat
  F{}{}$ \cite[1.3, 2.6, 4.8]{puig09}.  In this section we examine the homotopy types of $\catt F{}*$ and  $\catt F{}{\mathrm{sfc}}$.



We begin with $\catt F{}*$, searching for a class of ``small'' subgroups that control the homotopy type.  For our Euler characteristic intuition-building, the essential fact here is that the coweighting for $[\catt F{}*]$ vanishes off of the elementary abelian subgroups by \cite[Theorem 7.7]{jmm_mwj:2010}.
\begin{thm}\label{thm:OrbitCatElemAb}
The inclusion $\catt F{}{*+\mathrm{eab}} \hookrightarrow \catt F{}{*}$ is a homotopy equivalence.
\end{thm}
\begin{proof}$\left.\right.$
\begin{itemize}
\item[{\bf[EC]}]
    Comparing the reduced Euler characteristic expression  for the coweighting of $[\catt F{}*]$ from Theorem~\ref{lemma:underwt} to  \cite[Theorem 7.7]{jmm_mwj:2010}  yields
    \begin{equation*}
       \frac{-\rchi(\catt FK{(1,K)})}{|\catt F{}*(K)|} =
        k_{[K]}^{[\catt F{}*]} = 
        \frac{-\rchi(\catt F{}*//K)}{|\catt F{}*(K)|}.
    \end{equation*}
    Therefore $\catt FK{(1,K)}$ and $\catt F{}*//K$ have identical
    \Euc s for any object $K$ of $\catt F{}*$.  By
     Lemma~\ref{lemma:(1,P)contract}.\eqref{lemma:(1,P)contractB} and \eqref{lemma:(1,P)contractE}, $\rchi(\catt F{}*//K)$ can only be nonzero if $K$ is elementary abelian.
    
\item[{\bf[HE]}] In fact, there are
    equivalences of categories
    \begin{equation*}
      i_K \colon \catt FK{(1,K]} \to \catt F{}*/K, \qquad
      i_K \colon \catt FK{(1,K)} \to \catt F{}*//K
    \end{equation*}
    On an object $H\leq K$, we have $Hi_K = [\iota^H_K] \in \catt F{}*(H,K)$ is the class
    of the inclusion $\iota^H_K \in \cat F{}*(H,K)$ of $H$ into $K$. Observe that there is an obvious
    identification of \m\ sets
    \begin{equation*}
      \catt FK*(H_1,H_2) = (\catt F{}{*}/K)(H_1i_K, H_2i_K),
    \end{equation*}
    which defines the effect of  $i_K$ on \m\ sets. Thus  $i_K$ is full and faithful. It is also easily seen to be essentially
    surjective on objects, hence an equivalence of
    categories. 
     
    Combining the homotopy equivalence $\catt FK{(1,K)}\simeq\cat F{}*//K$ with Lemma~\ref{lemma:(1,P)contract}.\eqref{lemma:(1,P)contractE}, we have
    \begin{equation*}
      \mathrm{supp}(\catt F{}*//\bullet) =  \Ob{\catt F{}{*+\mathrm{eab}}}
    \end{equation*}
    and Bouc's Theorem~\ref{lemma:boucEI2} shows that the inclusion of
    $\catt F{}{*+\mathrm{eab}}$ into $\catt F{}*$ is a homotopy
    equivalence.

\end{itemize}
\end{proof}

We now turn to the question of finding a ``large'' collection of subgroups that controls  the homotopy type of the exterior quotient, in some sense dual to the elementary abelian subgroups of Theorem~\ref{thm:OrbitCatElemAb}.  There is a new technical difficult we must take into consideration here:  We lack a good understanding of the full exterior quotient of a Frobenius $P$-category.  Much more is known about the $\cat F{}{}$-selfcentralizing subcategory $\catt F{}{\mathrm{sfc}}$, where we can make use of the following facts:

  \begin{itemize}
  \item All \m s in $\catt F{}{\mathrm{sfc}}$ are epi\m s
    \cite[Corollary 4.9]{puig09}, therefore

  \item  The categories $H/\catt F{}{\mathrm{sfc}}$ are thin

  \item The \we\ for $\catt F{G}{\mathrm{sfc}}$ vanishes off the $\cat
    F{G}{}$-radical subgroups \cite[Corollary 8.6]{jmm_mwj:2010}

  \end{itemize}
  
  We will also need the following technical result, which is a
  reformulation of \cite[Proposition 2.4]{diaz_libman09}:
    
      \begin{lemma}\label{lemma:diaz_libman}
    Let $H$, $N$, and $K$ be objects of $\cat F{}{}$ such that $H$ is
    $\cat F{}{}$-selfcentralizing and $H \leq N \leq N_P(H)$.  An
    $\cat F{}{}$-\m\ $\func \varphi HK$ extends to an $\cat F{}{}$-\m\
    $\func \psi NK$ if and only if $\cat FN{}(H)^\varphi \leq \cat
    FK{}(H^\varphi)$.
  \end{lemma}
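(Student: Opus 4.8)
The strategy is to reduce the lemma to the extension axiom for the Frobenius category $\cat F{}{}$; the selfcentralizing hypothesis on $H$ is exactly what is needed to replace the ambient group $P$ appearing there by $K$. Write $R = H^\varphi \leq K \leq P$, let $c_n\colon H \to H$, $h \mapsto h^n$, denote conjugation by $n \in N$, and for $\alpha \in \cat F{}{}(H)$ let $\alpha^\varphi \in \operatorname{Aut}(R)$ be its transport $\varphi^{-1}\alpha\varphi$ (composed in diagrammatic order), so that $\cat FN{}(H)^\varphi = \{c_n^\varphi \mid n \in N\}$ and the condition in the lemma reads: $c_n^\varphi \in \cat FK{}(R)$ for every $n \in N$.

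\emph{Forward direction.} Suppose $\varphi$ extends to an $\cat F{}{}$-\m\ $\func\psi NK$, necessarily an injective homomorphism. Fix $n \in N$. Since $n$ normalizes $H$ and $\psi$ is a homomorphism, $n\psi$ lies in $N^\psi \leq K$ and normalizes $R = H^\psi = H^\varphi$, and the computation $(h\varphi)^{n\psi} = (n^{-1}hn)\psi = (h^{c_n})\varphi$ for $h \in H$ shows that conjugation by $n\psi$ restricts on $R$ to $c_n^\varphi$. Hence $c_n^\varphi \in \cat FK{}(R)$ for all $n$, that is, $\cat FN{}(H)^\varphi \leq \cat FK{}(H^\varphi)$.

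\emph{Converse.} Assume $\cat FN{}(H)^\varphi \leq \cat FK{}(H^\varphi)$. Applying Definition~\ref{defn:Frad} to the \m\ $\func\varphi HP$ (that is, $\varphi$ followed by $K \hookrightarrow P$) and using that $H$ is $\cat F{}{}$-selfcentralizing gives $C_P(R) \leq R$, hence $C_P(R) = Z(R) \leq R \leq K$; the same applied to every $\cat F{}{}$-\m\ out of $H$ shows $|C_P(H')|$ is constant on the $\cat F{}{}$-conjugacy class of $H$, so $R$ is fully centralized. By the extension axiom, one of the saturation axioms of $\cat F{}{}$ \cite{puig09,blo03}, the \m\ $\func\varphi HP$ therefore extends to an $\cat F{}{}$-\m\ $\func{\overline\varphi}{N_\varphi}P$, where $N_\varphi = \{n \in N_P(H) \mid c_n^\varphi \in \cat FP{}(R)\}$. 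Since $K \leq P$ gives $\cat FK{}(R) \leq \cat FP{}(R)$, the hypothesis yields $N \leq N_\varphi$, so $\overline\varphi$ restricts to an $\cat F{}{}$-\m\ $N \to P$ extending $\varphi$. It remains to show $N^{\overline\varphi} \leq K$: granting this, the corestriction $\func\psi NK$ of $\overline\varphi|_N$ (the composite of the $\cat F{}{}$-iso\m\ $N \xrightarrow{\cong} N^{\overline\varphi}$ with the inclusion $N^{\overline\varphi} \hookrightarrow K$) is an $\cat F{}{}$-\m\ extending $\varphi$, as required. To prove $N^{\overline\varphi} \leq K$, fix $n \in N$; exactly as in the forward direction, conjugation by $n\overline\varphi$ restricts on $R$ to $c_n^\varphi$, while the hypothesis supplies $k \in N_K(R)$ inducing the same automorphism $c_n^\varphi$ of $R$; hence $(n\overline\varphi)k^{-1}$ centralizes $R$, so $(n\overline\varphi)k^{-1} \in C_P(R) \leq R \leq K$ and therefore $n\overline\varphi \in K$.

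The only substantive input is the extension axiom, so I do not anticipate a real obstacle; the single point deserving care is to keep the diagrammatic composition conventions straight when identifying restrictions of inner automorphisms of $P$ with automorphisms transported along $\varphi$, and to notice that it is precisely the inclusion $C_P(R) \leq R \leq K$, valid because $H$ is $\cat F{}{}$-selfcentralizing, that forces the extension $\overline\varphi$ to carry $N$ into $K$ rather than merely into $P$.
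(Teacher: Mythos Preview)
Your proof is correct and follows essentially the same approach as the paper: apply the extension axiom to get an extension $\func{\overline\varphi}{N}{P}$, then use $C_P(H^\varphi) \leq H^\varphi \leq K$ together with the hypothesis to force $N^{\overline\varphi} \leq K$. You are slightly more thorough than the paper, which omits the forward direction and does not explicitly verify $N \leq N_\varphi$ before invoking the extension axiom, but the substance is identical.
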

  \begin{proof}
    We prove the ``if'' implication, as the converse is clear.  Since $H$ is $\cat F{}{}$-selfcentralizing, the same is true of
    $H^\varphi$ and thus $H^\varphi$ is fully centralized in $\cat
    F{}{}$ \cite[4.8]{puig09}. By the Extension Axiom for Frobenius
    $P$-categories and our assumption, $\func \varphi HK$ extends to a \m\ $\func \rho
    NP$ \cite[2.10.1]{puig09}. We claim that $(x)\rho \in K$ for all
    $x \in N$. By assumption, there is some $y \in K$ such that
    conjugation with $(x)\rho$ and with $y$ has the same effect on
    $H^\varphi$. This means that $(x)\rho y^{-1} \in C_P(H^\varphi)
    \leq Z(H^\varphi) \leq H^\varphi \leq K$, and thus $(x)\rho \in
    K$.  The corestriction $\psi = K \vert \rho \colon N \to K$ of
    $\func \psi NP$ extends $\func \varphi HK$.
  \end{proof}

  Consequently,
  \begin{equation*}
    \catt F{}{}(N,K) = \catt F{}{}(H,K)^{\cat FN{}(H)}
  \end{equation*}
  under the assumptions of Lemma~\ref{lemma:diaz_libman}.


We are now ready to prove: 

  \begin{thm}
    The  inclusion $ \catt F{}{\mathrm{sfc}+\mathrm{rad}}\hookrightarrow \catt F{}{\mathrm{sfc}}$ is a homotopy equivalence.
  \end{thm}

  \begin{proof}  Fix an $\cat F{}{}$-selfcentralizing subgroup $H\leq P$, and let $G:=\catt F{}{}(H)$ be the automorphism group of $H$ in the exterior quotient category.
  \begin{itemize}
  \item[{\bf[EC]}]
    Consider the special case that $\catt F {}{\mathrm{sfc}}=\catt F G{\mathrm{sfc}}$ for some finite group $G$ inducing the exterior quotient category $\catt F{}{}$.  The weighting for $\catt F G{\mathrm{sfc}}$ was computed in \cite[Proposition 8.5]{jmm_mwj:2010}; comparison with Theorem~\ref{lemma:underwt} yields
     \begin{equation*}
       \frac{-\rchi(\cat S{\cattc FG(H)}*)}{|\cattc FG(H)|} =
        k^{[H]}_{[\cattc FG]} = 
        \frac{-\rchi(H//\cattc FG)}{|\cattc FG(H)|}.
     \end{equation*}
     Thus the proper coslice category $H//\cattc FG$ and the poset
     $\cat S{\cattc FG(H)}*$ have identical Euler characteristics for any object $H$
     of $\catt F{}{\mathrm{sfc}}$.  
     
     We believe that an abstract version of \cite[Proposition 8.5]{jmm_mwj:2010} that does not reference an ambient finite group is true as well.  Such a result would imply that $H//\catt F{}{\mathrm{sfc}}$ and $\cat S{\catt F{}{\mathrm{sfc}}(H)}*$ should in general have identical reduced Euler characteristics.  (In fact, this result will follow from the homotopy equivalence of the next paragraph.)  As the Euler characteristic computation serves primarily to direct our attention toward the class of subgroups that control the homotopy type, this special case is already enough to suggest that we should consider the $\cat F{}{}$-radical subgroups.  That is where we will focus our attention.
     
     \item[{\bf[HE]}] We claim there is a homotopy equivalence $H//\catt F{}{}\simeq\cat S{\catt F{}{}(H)}*$. There are functors
   \begin{equation} \label{eq:rHtildeF}
      r_H \colon H/\catt F{}{} \to \cat S{\catt F{}{}(H)}{}, \qquad
      r_H \colon H//\catt F{}{} \to \cat S{\catt F{}{}(H)}{*}
    \end{equation}
    There is no loss of generality in  assuming that $H$ is fully
    normalized in $\catt F{}{}$, so that the order of the $P$-normalizer of $H$ is maximal in its $\catt F{}{}$-isomorphism class.  The functor $r_H$ is defined by $[\varphi]r_H~=~{}^\varphi\catt FK{}(H^\varphi)$, where
    $[\varphi]=\varphi \cat FK{}{}(K) \in \catt F{}{}(H,K) = \cat
    F{}{}(H,K)/\cat FK{}(K)$ is an object of $ H/\catt F{}{}$.  Note that this is well-defined
    even though $\varphi$ is only defined up to conjugacy in $K$.  The
    group
     \begin{equation*}
    \catt FK{}(H^\varphi) = 
    C_K(H^\varphi) \backslash N_K(H^\varphi) / H^\varphi =
    Z(H^\varphi) \backslash N_K(H^\varphi) / H^\varphi =
    N_K(H^\varphi) / H^\varphi
  \end{equation*}
  and the isomorphic group $r_H(\varphi \cat FK{}(K)) =  
  {}^\varphi\catt FK{}(H^\varphi)$ are related by the commutative diagram
    \begin{equation*}
      \xymatrix{
        H \ar@{.>}[d]_{{}^\varphi\catt FK{}(H^\varphi)} 
        \ar[r]^\varphi_\cong &
        H^\varphi  \ar[d]^{\catt FK{}(H^\varphi) = N_K(H^\varphi)/H^\varphi} \\
        H \ar[r]^\varphi_\cong & H^\varphi}
    \end{equation*}
    It is clear that ${}^\varphi\catt FK{}(H^\varphi)$ is  a
    $p$-subgroup of $\catt F{}{}(H)$ and that $r_H(\varphi_1) \leq
    r_H(\varphi_2)$ whenever there is a $\catt F{}{}$-\m\
    \begin{equation*}
      \xymatrix{
        & H \ar[dl]_{\varphi_1 \cat F{K_1}{}(K_1)} 
        \ar[dr]^{\varphi_2 \cat F{K_2}{}(K_2)} \\
        K_1 \ar[rr] && K_2 }
    \end{equation*}
    under $H$. Thus $r_H$ is a functor.  We now want to use Quillen's
    Theorem A to show that $r_H$ is an equivalence of categories.

    Let $\overline{L}$ be a $p$-subgroup of $\catt F{}{}(H) = \cat
    F{}{}(H)/\cat FH{}(H)$. We may assume that $\overline{L}$ is
    contained in the \syl p\ $\catt FP{}(H) = N_P(H)/H$ of $\catt
    F{}{}(H)$ (which is known to be Sylow by the assumption that $H$ is fully normalized in $\catt F{}{}$). There is a unique $p$-subgroup $L \in [P,N_P(H)]$ such
    that $\overline{L} = L/H$.  The category $\overline{L}/r_H$ is the
    full subcategory of $H/\catt F{}{}$ generated by all objects
    $\varphi \cat FK{}(K) \in \catt F{}{}(H,K)$ such that
    $\overline{L}^\varphi \leq \catt FK{}(H^\varphi) =
    N_K(H^\varphi)/H^\varphi$, or, equivalently, $L^\varphi \leq
    N_K(H^\varphi)$. Here is an attempt to visualize this relationship:
  \begin{equation*}
    \xymatrix{
      {H} 
      \ar[d]_{L/H} 
        \ar[r]_-\cong^-\varphi &
        {H^\varphi}  
        \ar[d]^{N_K(H^\varphi)/H^\varphi} \\
         {H} \ar[r]_-\cong^-\varphi & {H^\varphi}}
  \end{equation*}
  The inclusion $\iota^H_L \colon H
  \hookrightarrow L$ of $H$ into $L$ represents both a \m\ in $\catt
  F{}{}(H,L)$ and an object of $\overline{L}/r_H$ because
  $\overline{L}$ is contained in $(\iota^H_L\cat FL{}(L))r_H =
  N_L(H)/H = L/H = \overline{L}$. By Lemma~\ref{lemma:diaz_libman}
  there is an extension in $\catt F{}{}$ of $\varphi \colon H \to K$
  \begin{equation*}
    \xymatrix{
      & H \ar@{_(->}[dl]_{\iota^H_L} \ar[dr]^{\varphi} \\
      L \ar@{.>}[rr] && K}
  \end{equation*}
  to a \m\ $L \to K$.
  We have now shown that $\iota^H_L\cat FL{}(L)$ is an initial object
  of $\overline{L}/r_H$ for any object $\overline{L}$ of $\cat S{\catt
    F{}{}(H)}{}$. According to Quillen's Theorem A
  (Theorem~\ref{thm:QuillenA}), $r_H$ is a homotopy equivalence of
  categories.

  
  Since the functor $r_H$ takes noniso\m s $\varphi\cat FK{}(K) \in
  \catt F{}{}(H,K) \subset \Ob{H/\catt F{}{}}$ to nonidentity
  $p$-subgroups of $\catt F{}{}(H)$, it restricts to a functor $r_H
  \colon H//\catt F{}{} \to \cat S{\catt F{}{}(H)}*$ of $H//\catt
  F{}{}$ into the Brown poset of the auto\m\ group of $H$. But since
  $\overline{L}/r_H$ is contractible for any nonidentity
  $p$-subgroup $\overline{L}$ of $\catt F{}{}(H)$, we already know
  that the restricted functor $r_H$ is a homotopy equivalence of
  categories. By property \eqref{eq:Frad},
    \begin{equation*}
      \mathrm{supp}(\bullet//\catt F{}{\mathrm{sfc}}) \subset
          \Ob{\catt F{}{\mathrm{sfc}+\mathrm{rad}}}
    \end{equation*}
    and Bouc's Theorem~\ref{lemma:boucEI2} shows that the inclusion of
    $\catt F{}{\mathrm{sfc}+\mathrm{rad}}$ into $\catt
    F{}{\mathrm{sfc}}$ is a homotopy equivalence.
\end{itemize}

  \end{proof}

  \section{Linking categories}
  \label{sec:link}

  Let $\cat L{}{\mathrm{sfc}}$ be the centric linking system
  associated to a Frobenius $P$-category $\cat F{}{}$ \cite[Definition
  1.7]{blo03}.  We will prove that the homotopy type of $\cat L{}{\mathrm{sfc}}$ is controlled by the $\cat F{}{}$-radical subgroups.  This result is part of \cite[Theorem B]{bcglo01}, the full strength of which would be accessible by our methods if we were to consider the more general notion of a \emph{quasi}centric linking system.


We will need the following facts:
  \begin{itemize}
  \item All \m s in $\cat L{}{\mathrm{sfc}}$ are mono\m s and epi\m s
     \cite[Proposition 24.2]{puig09} 
  \item The \we\ for $\cat L{}{\mathrm{sfc}}$ vanishes off the $\cat
    F{}{}$-radical subgroups \cite[Proposition 8.5]{jmm_mwj:2010}
  \end{itemize}


  \begin{thm}
    The inclusion functor $\cat L{}{\mathrm{sfc}+\mathrm{rad}} \to \cat
    L{}{\mathrm{sfc}}$ is a homotopy equivalence.
  \end{thm}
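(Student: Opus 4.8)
The plan is to mirror the structure of the proof for $\catt F{}{\mathrm{sfc}+\mathrm{rad}} \hookrightarrow \catt F{}{\mathrm{sfc}}$ in Section~\ref{sec:tildeF}, using Bouc's Theorem~\ref{lemma:boucEI2} for finite $\mathrm{EI}$-categories. First I would note that $\cat L{}{\mathrm{sfc}}$ is a finite $\mathrm{EI}$-category (all morphisms are mono\m s and epi\m s, so endo\m s are auto\m s) and that $\cat L{}{\mathrm{sfc}+\mathrm{rad}}$ is closed under iso\m s. Thus it suffices to show that $\mathrm{supp}(\bullet // \cat L{}{\mathrm{sfc}}) \subset \Ob{\cat L{}{\mathrm{sfc}+\mathrm{rad}}}$, i.e.\ that $H // \cat L{}{\mathrm{sfc}}$ is contractible whenever $H$ is an $\cat F{}{}$-selfcentralizing object that is not $\cat F{}{}$-radical.

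The key step is to identify, up to homotopy, the strict coslice $H // \cat L{}{\mathrm{sfc}}$. I would compare the two expressions for the \we\ for $[\cat L{}{\mathrm{sfc}}]$ coming from \cite[Proposition 8.5]{jmm_mwj:2010} and from Theorem~\ref{lemma:underwt}, namely
\begin{equation*}
  \frac{-\rchi(\cat S{\catt F{}{}(H)}*)}{|\cat L{}{\mathrm{sfc}}(H)|}
  = k^{[H]}_{[\cat L{}{\mathrm{sfc}}]}
  = \frac{-\rchi(H // \cat L{}{\mathrm{sfc}})}{|\cat L{}{\mathrm{sfc}}(H)|},
\end{equation*}
which already shows that $H // \cat L{}{\mathrm{sfc}}$ and the Brown poset $\cat S{\catt F{}{}(H)}*$ have the same reduced \Euc. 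Then I would promote this to an actual homotopy equivalence by building a functor $r_H \colon H/\cat L{}{\mathrm{sfc}} \to \cat S{\catt F{}{}(H)}{}$, restricting to $r_H \colon H//\cat L{}{\mathrm{sfc}} \to \cat S{\catt F{}{}(H)}{*}$, in parallel with the construction in the exterior-quotient section. A linking-system \m\ $H \to K$ projects to an $\cat F{}{}$-\m\ $\varphi \colon H \to K$, which is injective; one sends it to the $p$-subgroup $N_K(H^\varphi)/H^\varphi = \catt FK{}(H^\varphi)$ of $\catt F{}{}(H) \cong \catt F{}{}(H^\varphi)$, transported back along $\varphi$. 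One checks this is well-defined and monotone, hence a functor. To see $r_H$ is a homotopy equivalence via Quillen's Theorem~A, one shows each coslice $\overline{L}/r_H$ (for $\overline{L}$ a $p$-subgroup of $\catt F{}{}(H)$, which one may assume lies in $N_P(H)/H$, corresponding to $L \in [P, N_P(H)]$) has an initial object: the class of the inclusion $H \hookrightarrow L$. Existence of the required factorization $H \to K$ through $L \to K$ would follow from lifting Lemma~\ref{lemma:diaz_libman} to the linking system, or from the analogous extension property for centric linking systems \cite{blo03,puig09}.

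Having established $H // \cat L{}{\mathrm{sfc}} \simeq \cat S{\catt F{}{}(H)}*$, property \eqref{eq:Frad} gives
\begin{equation*}
  \mathrm{supp}(\bullet // \cat L{}{\mathrm{sfc}})
  \subset \{ H \mid \text{$\cat S{\catt F{}{}(H)}*$ noncontractible} \}
  \subset \Ob{\catt F{}{\mathrm{rad}}} \cap \Ob{\cat L{}{\mathrm{sfc}}}
  = \Ob{\cat L{}{\mathrm{sfc}+\mathrm{rad}}},
\end{equation*}
and Bouc's Theorem~\ref{lemma:boucEI2} finishes the argument. The main obstacle I anticipate is the lifting step: upgrading the extension statement of Lemma~\ref{lemma:diaz_libman} (phrased for $\cat F{}{}$ and $\catt F{}{}$) to the centric linking system $\cat L{}{\mathrm{sfc}}$, where one must work with honest linking-system morphisms rather than their $\cat F{}{}$-images and control the ambiguity in the fibers $O^pC_{(-)}(-)$. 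Everything else is a routine transcription of the $\catt F{}{}$-case; indeed one could alternatively try to deduce the result directly from the $\catt F{}{}$-case using the fact that $\cat L{}{\mathrm{sfc}} \to \catt F{}{\mathrm{sfc}}$ is (after suitable identification) well understood, but carrying out the coslice computation intrinsically in $\cat L{}{\mathrm{sfc}}$ seems cleanest.
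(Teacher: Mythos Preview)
Your overall architecture matches the paper's (Bouc's Theorem~\ref{lemma:boucEI2} after identifying $H//\cat L{}{\mathrm{sfc}} \simeq \cat S{\catt F{}{}(H)}^*$), but the paper takes precisely the ``alternative'' you mention at the end rather than your primary route. Instead of building $r_H$ intrinsically for $\cat L{}{\mathrm{sfc}}$ and lifting Lemma~\ref{lemma:diaz_libman} to the linking system, the paper shows that the induced functor $H/\widetilde{\pi} \colon H/\cat L{}{\mathrm{sfc}} \to H/\catt F{}{\mathrm{sfc}}$ (and its restriction to noniso\m s) is a homotopy equivalence. The tool is the unique lifting property from \cite[Lemma~1.10]{blo03}: a commutative triangle in $\catt F{}{\mathrm{sfc}}$ under $H$ with two specified lifts of the slanted sides to $\cat L{}{\mathrm{sfc}}$ has a unique lift of the base; hence any $\varphi \in \cat L{}{\mathrm{sfc}}(H,K)$ is initial in $(\varphi)\widetilde{\pi}/(H/\widetilde{\pi})$, and Quillen's Theorem~A applies. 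Composing $H//\widetilde{\pi}$ with the already established $r_H$ of \eqref{eq:rHtildeF} yields $H//\cat L{}{\mathrm{sfc}} \simeq \cat S{\catt F{}{}(H)}^*$. This sidesteps exactly the obstacle you flag: no extension lemma is needed in $\cat L{}{}$, only the (easier) unique lifting of triangles, and the Section~\ref{sec:tildeF} computation is reused wholesale. Your direct approach would also succeed once the linking-system extension property is in hand (it is standard), but the paper's factoring through $\catt F{}{\mathrm{sfc}}$ is shorter and avoids reproving anything.
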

  \begin{proof}
    Let $H$ be a $\cat F{}{}$-selfcentralizing object of $\cat F{}{}$.
    The functor $\widetilde{\pi} \colon \cat L{}{\mathrm{sfc}} \to
    \catt F{}{\mathrm{sfc}}$ is bijective on objects and $|K|$-to-$1$
    for on \m\ sets $\cat L{}{\mathrm{sfc}}(H,K) \to \catt
    F{}{\mathrm{sf c}}(H,K)$ with codomain $K \in \Ob{\cat
      F{}{\mathrm{sfc}}}$.  $K = \cat LK{}(K) \leq \cat L{}{}(K)$ acts
    freely from the right on $\cat L{}{}(H,K)$ with quotient $\cat
    L{}{\mathrm{sfc}}(H,K)/K = \catt F{}{\mathrm{sfc}}(H,K)$
    \cite[Lemma 1.10]{blo03}.  This implies that if $\varphi_1 \in
    \cat L{}{}(H,K_1)$, $\varphi_2 \in \cat L{}{}(H,K_2)$, and the
    commutative $\catt F{}{}$-diagram to the right has a solution
    \begin{equation*}
      \xymatrix{&H \ar[dl]_-{\varphi_1} \ar[dr]^-{\varphi_2}
       \ar@{}[d]|-*+<8pt>[Fo]{\cat L{}{}} \\
        K_1 \ar@{.>}[rr] && K_2} \qquad\qquad
      \xymatrix{&H \ar[dl]_{(\varphi_1)\pi} \ar[dr]^-{(\varphi_2)\pi} 
        \ar@{}[d]|-*+<8pt>[Fo]{\catt F{}{}} \\
        K_1 \ar@{.>}[rr] && K_2}
    \end{equation*}
    then the commutative $\cat L{}{}$-diagram to the left has a unique
    solution \cite[Lemma 1.10]{blo03}.  Consider the functor
    \begin{equation*}
      H/\widetilde{\pi} \colon H/\cat L{}{\mathrm{sfc}} \to 
      H/\catt F{}{\mathrm{sfc}}
    \end{equation*}
    induced by the functor $\widetilde{\pi} \colon \cat
    L{}{\mathrm{sfc}} \to \catt F{}{\mathrm{sfc}}$.  The above
    considerations mean that any $\varphi \in \cat L{}{}(H,K) \subset
    \Ob{H/\cat L{}{}}$ is initial in the category
    $(\varphi)\widetilde{\pi}/H/\widetilde{\pi}$. By Quillen's Theorem
    A (Theorem~\ref{thm:QuillenA}), $H/\widetilde{\pi}$ is a homotopy
    equivalence.

    Restricting to the noniso\m s we get a homotopy equivalence
    $H//\widetilde{\pi} \colon H//\cat L{}{} \to H//\catt F{}{}$.
    Compose these homotopy equivalences with the homotopy equivalences
    of \eqref{eq:rHtildeF} to get homotopy equivalences
    \begin{equation} \label{eq:rHL}
       H/\cat L{}{\mathrm{sfc}} \to \cat S{\catt F{}{}(H)}{}, \qquad
       H//\cat L{}{\mathrm{sfc}} \to \cat S{\catt F{}{}(H)}*
    \end{equation}
    By property \eqref{eq:Frad},
    \begin{equation*}
      \mathrm{supp}(\bullet//\cat L{}{\mathrm{sfc}}) \subset
          \Ob{\cat L{}{\mathrm{sfc}+\mathrm{rad}}}
    \end{equation*}
    and Bouc's Theorem~\ref{lemma:boucEI2} shows that the inclusion of
    $\cat L{}{\mathrm{sfc}+\mathrm{rad}}$ into $\cat
    L{}{\mathrm{sfc}}$ is a homotopy equivalence.
  \end{proof}



  It is worth noting that the main connection between the theory of Frobenius $P$-categories and topology comes from the classifying space of $\cat L{}{\mathrm{sfc}}$, which should be thought of as a generalization of the $p$-completion of the classifying space of a finite group.  What is interesting in the preceding proof  is that we are able to show that control of homotopy for the linking system actually comes from the seemingly less natural question about control of homotopy in the exterior quotient category $\catt F{}{\mathrm{sfc}}$.  

Finally, we close with the dual statement, where the homotopy type is controlled by the ``small'' nonidentity elementary abelian groups subgroups of $P$.  As there is currently no clear definition for an abstract linking system which has all nonidentity subgroups of $P$ as objects, we will restrict our attention to the case where an actual finite group induces $\cat LG*$.

  \begin{prop}\label{prop:LG*eab}
     The inclusion $\cat L{G}{*+\mathrm{eab}} \to \cat L{G}{*}$  is
  a homotopy equivalence.
  \end{prop}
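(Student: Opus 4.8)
The plan is to run the argument in parallel with the proof of Proposition~\ref{prop:FeabtoF*II}, replacing the Frobenius category by the linking category $\cat LG{}$ throughout. First I would record two structural facts. The category $\cat LG*$ is a finite $\mathrm{EI}$-category: for a nonidentity $p$-subgroup $H$ of $G$ the endomorphism monoid $\cat LG{}(H)=\qt{O^pC_G(H)}{N_G(H)}$ is a group, since $O^pC_G(H)$ is normal in $N_G(H)$. And every $\cat LG{}$-morphism $H\to K$ is represented by a transporter element $g\in N_G(H,K)$, so that $|H|=|H^g|\le|K|$, with equality exactly when the morphism is an isomorphism, in which case $H^g=K$ and $K\cong H$ as groups. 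In particular $\cat LG{*+\mathrm{eab}}$ is a full subcategory of $\cat LG*$ closed under isomorphisms, so by Bouc's Theorem~\ref{lemma:boucEI2} it suffices to prove that $\mathrm{supp}(\cat LG*//\bullet)=\Ob{\cat LG{*+\mathrm{eab}}}$.

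The main step is to identify the slice categories $\cat LG*//K$ up to homotopy, exactly as $\cat F{}*//K$ was identified with $\cat SK{(1,K)}$ in Proposition~\ref{prop:FeabtoF*II}. For a nonidentity $p$-subgroup $K$ of $G$ I would introduce functors
\[
 r_K\colon \cat LG*/K \to \cat SK{(1,K]},\qquad i_K\colon \cat SK{(1,K]} \to \cat LG*/K,
\]
where $r_K$ sends an object $\varphi\in\cat LG{}(H,K)=\qt{O^pC_G(H)}{N_G(H,K)}$, represented by $g\in N_G(H,K)$, to the subgroup $H^g\le K$ (well defined because $O^pC_G(H)$ centralizes $H$), and $i_K$ sends $H\le K$ to the class of $1\in N_G(H,K)$, i.e.\ the inclusion $H\hookrightarrow K$ read as a $\cat LG{}$-morphism. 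Then $i_Kr_K$ is the identity of $\cat SK{(1,K]}$, and the morphisms represented by the various $g$'s assemble into a natural transformation from the identity functor of $\cat LG*/K$ to $r_Ki_K$, the component at $\varphi$ being the class of $g\in N_G(H,H^g)$; as that class is an isomorphism in $\cat LG{}$, this is a natural isomorphism. Hence $r_K$ and $i_K$ are homotopy equivalences, and since $\varphi$ is a nonisomorphism precisely when $H^g\lneq K$ they restrict to homotopy equivalences $\cat LG*//K\simeq\cat SK{(1,K)}$.

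To conclude, Lemma~\ref{lemma:(1,P)contract}.\eqref{lemma:(1,P)contractA} tells us that $\cat SK{(1,K)}$, hence $\cat LG*//K$, is noncontractible if and only if $K$ is elementary abelian. Therefore $\mathrm{supp}(\cat LG*//\bullet)=\Ob{\cat LG{*+\mathrm{eab}}}$, and Bouc's Theorem~\ref{lemma:boucEI2} gives that the inclusion $\cat LG{*+\mathrm{eab}}\hookrightarrow\cat LG*$ is a homotopy equivalence.

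There is no deep obstacle here: the work is bookkeeping with the left cosets $O^pC_G(H)g$ that represent the morphisms of $\cat LG{}$. The two points to check carefully are that $r_K$ is independent of the chosen transporter representative, and that the morphisms represented by $g$ genuinely form a natural transformation --- for which one verifies that a morphism $\alpha\colon\varphi_1\to\varphi_2$ of $\cat LG*/K$, represented by $h$, satisfies $hg_2\in O^pC_G(H_1)\,g_1$, whence $H_1^{g_1}=(H_1^h)^{g_2}\le H_2^{g_2}$. I expect these verifications, together with the observation that $\cat LG*$ is $\mathrm{EI}$, to be the whole content. An alternative is to transport the identification $\cat FG*//K\simeq\cat SK{(1,K)}$ (obtained as in Proposition~\ref{prop:FeabtoF*II}) along the projection $\cat LG{}\to\cat FG{}$, using that $\qt{O^pC_G(H)}{C_G(H)}$ acts freely on $\cat LG{}(H,K)$ in the manner of \cite[Lemma~1.10]{blo03}, but the direct construction above is shorter.
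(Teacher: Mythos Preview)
Your proposal is correct and follows essentially the same approach as the paper. The paper's proof is terser --- it records the matching Euler characteristics as a heuristic and then simply says the homotopy equivalence $\cat LG*//K\simeq\cat SK{(1,K)}$ is obtained ``in much the same way as in the proof of Proposition~\ref{prop:FeabtoF*II}'' --- whereas you spell out the functors $r_K$, $i_K$ and the natural isomorphism explicitly for $\cat LG{}$; the underlying argument and the appeal to Lemma~\ref{lemma:(1,P)contract}.\eqref{lemma:(1,P)contractA} and Theorem~\ref{lemma:boucEI2} are the same.
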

  \begin{proof}
    The two expressions, from \cite[Theorem 1.1.(2)]{jmm_mwj:2010},
    and Lemma~\ref{lemma:(1,P)contract}.\eqref{lemma:(1,P)contractB}
    and Theorem~\ref{lemma:underwt}, for the co\we\ for $[\cat L{G}*]$
    \begin{equation*}
       \frac{-\rchi(\cat SK{(1,K)})}{|\cat L{G}*(K)|} =
        k_{[K]}^{[\cat L{G}*]} = 
        \frac{-\rchi(\cat L{G}*//K)}{|\cat L{G}*(K)|}
    \end{equation*}
    show that $\cat SK{(1,K)}$ and $\cat L{G}*//K$ have identical \Euc
    s for any object $K$ of $\cat L{G}*$. In fact they are homotopy
    equivalent as we see in much the same way as in the proof of
    Theorem~\ref{prop:FeabtoF*II}. The proof now follows from
    Bouc's Theorem~\ref{lemma:boucEI2} because $\mathrm{supp}(\cat
    LG*//K) \subset \Ob{\cat LG{*+\mathrm{eab}}}$ by
    Lemma~\ref{lemma:(1,P)contract}.\eqref{lemma:(1,P)contractA}.
  \end{proof}



\def\cprime{$'$} \def\cprime{$'$} \def\cprime{$'$} \def\cprime{$'$}
\providecommand{\bysame}{\leavevmode\hbox to3em{\hrulefill}\thinspace}
\providecommand{\MR}{\relax\ifhmode\unskip\space\fi MR }
\providecommand{\MRhref}[2]{%
  \href{http://www.ams.org/mathscinet-getitem?mr=#1}{#2}
}
\providecommand{\href}[2]{#2}

\end{document}